\numberwithin{equation}{section}
\theoremstyle{plain}
\newtheorem{teo}{Theorem}[section]
\newtheorem{lemma}[teo]{Lemma}
\newtheorem{prop}[teo]{Proposition}
\newtheorem{coro}[teo]{Corollary}
\theoremstyle{definition}
\newtheorem{oss}[teo]{Remark}
\newtheorem*{ack}{Akcnowledgments}
\title{The fractional Makai-Hayman inequality}
\author[Bianchi]{Francesca Bianchi}
\address[F.\ Bianchi]{Dipartimento di Scienze Matematiche, Fisiche e Informatiche
	\newline\indent
	Universit\`a di Parma
	\newline\indent
	Parco Area delle Scienze 53/a, Campus, 43124 Parma, Italy}
\email{francesca.bianchi@unipr.it}
\author[Brasco]{Lorenzo Brasco}
\address[L.\ Brasco]{Dipartimento di Matematica e Informatica
	\newline\indent
	Universit\`a degli Studi di Ferrara
	\newline\indent
	Via Machiavelli 35, 44121 Ferrara, Italy}
\email{lorenzo.brasco@unife.it}
\subjclass[2010]{47A75, 39B72, 35R11}
\keywords{Poincar\'e inequality, fractional Laplacian, inradius, simply connected sets, Cheeger inequality.}
\date{\today}
\begin{document}

\begin{abstract}
We prove that the first eigenvalue of the fractional Dirichlet-Laplacian of order $s$ on a simply connected set of the plane can be bounded from below in terms of its inradius only. This is valid for $1/2<s<1$ and we show that this condition is sharp, i.\,e. for $0<s\le 1/2$ such a lower bound is not possible. The constant appearing in the estimate has the correct asymptotic behaviour with respect to $s$, as it permits to recover a classical result by Makai and Hayman in the limit $s\nearrow 1$. The paper is as self-contained as possible.
\end{abstract}

\maketitle

\begin{center}
\begin{minipage}{11cm}
\small
\tableofcontents
\end{minipage}
\end{center}

\section{Introduction}

\subsection{Background}
For an open set $\Omega\subset\mathbb{R}^N$, we indicate by $W^{1,2}_0(\Omega)$ the closure of $C^\infty_0(\Omega)$ in the Sobolev space $W^{1,2}(\Omega)$. We then consider the following quantity
\[
\lambda_1(\Omega):=\inf_{u\in W^{1,2}_0(\Omega)\setminus\{0\}}\frac{\displaystyle\int_\Omega|\nabla u|^2\,dx}{\displaystyle\int_\Omega|u|^2\,dx},
\]
which coincides with the bottom of the spectrum of the Dirichlet-Laplacian on $\Omega$. Observe that for a general open set, such a spectrum may not be discrete and the infimum value $\lambda_1(\Omega)$ may not be attained. Whenever a minimizer $u_1\in W^{1,2}_0(\Omega)$ of the problem above exists, we call $\lambda_1(\Omega)$ the {\it first eigenvalue of the Dirichlet-Laplacian on} $\Omega$. 
\par
By definition, such a quantity is different from zero if and only if $\Omega$ supports the Poincar\'e inequality
\[
c\,\int_\Omega |u|^2\,dx\le \int_\Omega |\nabla u|^2\,dx,\qquad \mbox{ for every } u\in C^\infty_0(\Omega).
\]
It is well-known that this happens for example if $\Omega$ is bounded or with finite measure or even bounded in one direction only.
However, in general it is quite complicate to give more general geometric conditions, assuring positivity of $\lambda_1$. In this paper, we will deal with the two-dimensional case $N=2$.  
\par
In this case, there is a by now classical result which asserts that 
\begin{equation}
\label{MH}
\lambda_1(\Omega)\ge\frac{C}{r_\Omega^2},
\end{equation}
for every {\it simply connected} set $\Omega\subset\mathbb{R}^2$.
Here $C>0$ is a universal constant and the geometric quantity $r_\Omega$ is the {\it inradius} of $\Omega$, i.e. the radius of a largest disk contained in $\Omega$. More precisely, this is given by
\[
r_\Omega:=\sup\Big\{\rho>0\,:\,\exists\, x\in\Omega\mbox{ such that }B_\rho(x)\subset\Omega\Big\}.
\]
Inequality \eqref{MH} is of course in scale invariant form, by recalling that $\lambda_1$ scales like a length to the power $-2$, under dilations. 
\par
Such a result is originally due to Makai (see \cite[equation (5)]{Ma}). It implies in particular that for a simply connected set in the plane, we have the following remarkable equivalence
\begin{equation}
\label{equivalence}
\lambda_1(\Omega)>0 \qquad \Longleftrightarrow \qquad r_\Omega<+\infty.
\end{equation}
Indeed, if the inradius is finite, we immediately get from \eqref{MH} that $\lambda_1(\Omega)$ must be positive. The converse implication is simpler and just based on the easy (though sharp) inequality
\[
\lambda_1(\Omega)\le \frac{\lambda_1(B_1)}{r_\Omega^2}.
\]
Here $B_1\subset\mathbb{R}^2$ is any disk with radius $1$ and the estimate simply follows from the monotonicity with respect to set inclusion of $\lambda_1$, together with its scaling properties.
\vskip.2cm\noindent
The proof in \cite{Ma} runs very similarly to that of the {\it Faber-Krahn inequality}, based on symmetrization techniques (see \cite[Chapter 3]{Hen}). It starts by rewriting the Dirichlet integral and the $L^2$ norm by using the Coarea Formula; then a clever use is made of a particular quantitative isoperimetric inequality in $\mathbb{R}^2$ (a {\it Bonnesen--type inequality}), in order to obtain a lower bound in terms of $r_\Omega$ only.
\par
It should be noticed that Makai's result has been overlooked or neglected for some years and then rediscovered independently by Hayman, by means of a completely different proof, see \cite[Theorem 1]{Ha}. For this reason, we will call \eqref{MH} the {\it Makai-Hayman inequality}.
\par
It is interesting to remark that the result by Makai is quantitatively better than the one by Hayman: indeed, the former obtains \eqref{MH} with $C=1/4$, while the latter is only able to get the poorer constant $C=1/900$ by his method of proof. 
\par
This could suggest that the attribution of this result to both authors is maybe too generous. On the contrary, we will show in this paper that, in despite of providing a poorer constant, the method of proof by Hayman is elementary, flexible and robust enough to be generalized to other situations, where Makai's and other approaches become too complicate or do not seem feasible. 
\par
In any case, we point out that the exact determination of the sharp constant in \eqref{MH}, i.\,e. 
\[
C_{MH}:=\inf\Big\{\lambda_1(\Omega)\,r_\Omega^2\, :\, \Omega\subset\mathbb{R}^2 \mbox{ simply connected with } r_\Omega<+\infty\Big\},
\] 
is still a challenging open problem.
The best result at present is that
\[
0.6197<C_{MH}<2.13,
\]
obtained by Ba\~{n}uelos and Carroll (see \cite[Corollary 1]{BC} for the lower bound and \cite[Theorem 2]{BC} for the upper bound). The upper bound has then been slightly improved by Brown in \cite{Br}, by using a refinement of the method by Ba\~nuelos and Carroll.
\par
The inequality \eqref{MH} has also been obtained by Ancona in \cite{An}, by using yet another proof. His result comes with the constant $C=1/16$, much better than Hayman's one, but still worse than that obtained by Makai. The proof by Ancona is quite elegant: it is based on the use of conformal mappings and the so-called {\it Koebe's one quarter Theorem} (see \cite[Chapter 12]{Kr}), which permits to obtain the following Hardy inequality for a simply connected set in the plane
\[
\frac{1}{16}\,\int_\Omega \frac{|\varphi|^2}{\mathrm{dist}(x,\partial\Omega)^2}\,dx\le \int_\Omega |\nabla \varphi|^2\,dx,\qquad \mbox{ for every } \varphi\in C^\infty_0(\Omega).
\]
From this, inequality \eqref{MH} is easily obtained (with $C=1/16$), by observing that 
\[
r_\Omega=\sup_{x\in\Omega} \mathrm{dist}(x,\partial\Omega),
\]
and then using the definition of $\lambda_1(\Omega)$. 
The conformality of the Dirichlet integral obviously plays a central role in this proof.
The result by Ancona is quite remarkable, as the Hardy inequality is proved without any regularity assumption on $\partial\Omega$.
A generalization of this result can be found in \cite{LS}.

\subsection{Goal of the paper and main results}
Our work is aimed at investigating the validity of a result analogous to \eqref{MH} for fractional Sobolev spaces. In order to be more precise, we need some definitions at first. Let $0<s<1$ and let us recall the definition of Gagliardo-Slobodecki\u{\i} seminorm
\[
[u]_{W^{s,2}(\mathbb{R}^N)}=\left(\iint_{\mathbb{R}^N\times\mathbb{R}^N}\frac{|u(x)-u(y)|^2}{|x-y|^{N+2\,s}}\,dx\,dy\right)^\frac{1}{2}.
\]
Accordingly, we consider the fractional Sobolev space
\[
W^{s,2}(\mathbb{R}^N)=\Big\{u\in L^2(\mathbb{R}^N)\, :\, [u]_{W^{s,2}(\mathbb{R}^N)}<+\infty\Big\},
\]
endowed with the norm
\[
\|u\|_{W^{s,2}(\mathbb{R}^N)}=\|u\|_{L^2(\mathbb{R}^N)}+[u]_{W^{s,2}(\mathbb{R}^N)}.
\]
Finally, we consider the space $\widetilde W^{s,2}_0(\Omega)$, defined as the closure of $C^\infty_0(\Omega)$ in $W^{s,2}(\mathbb{R}^N)$. Observe that by definition the elements of $\widetilde W^{s,2}_0(\Omega)$ have to be considered on the whole $\mathbb{R}^N$ and they come with a natural nonlocal homogeneous Dirichlet condition ``at infinity'', i.e. they identically vanish on the complement $\mathbb{R}^N\setminus\Omega$.
\par
We then consider the quantity
\begin{equation}\label{def l1s}
\lambda^s_1(\Omega):=\inf_{u\in \widetilde W^{s,2}_0(\Omega)\setminus\{0\}}\dfrac{[u]^2_{W^{s,2}(\mathbb{R}^N)}}{\|u\|^2_{L^2(\Omega)}}.
\end{equation}
Again by definition, this quantity is non-zero if and only if the open set $\Omega$ supports the fractional Poincar\'e inequality
\[
c\,\int_\Omega |u|^2\,dx\le \iint_{\mathbb{R}^N\times\mathbb{R}^N} \frac{|u(x)-u(y)|^2}{|x-y|^{N+2\,s}}\,dx,\qquad \mbox{ for every } u\in C^\infty_0(\Omega).
\]
As in the local case, whenever the infimum in \eqref{def l1s} is attained, this quantity will be called {\it first eigenvalue of the fractional Dirichlet-Laplacian of order $s$}. We recall that the latter is the linear operator denoted by the symbol $(-\Delta)^s$ and defined in weak form by
\[
\langle (-\Delta)^s u,\varphi\rangle =\iint_{\mathbb{R}^N\times\mathbb{R}^N} \frac{(u(x)-u(y))\,(\varphi(x)-\varphi(y))}{|x-y|^{N+2\,s}}\,dx\,dy,\qquad \mbox{ for every } \varphi\in C^\infty_0(\Omega).
\]
In this paper we want to inquire to which extent the Makai-Hayman inequality \eqref{MH} still holds for $\lambda_1^s$ defined above, still in the case of simply connected sets in the plane. Our main results assert that such an inequality is possible, provided $s$ is ``large enough''. More precisely, we have the following
\begin{teo}[Fractional Makai-Hayman inequality]
\label{teo:teorema principale}
Let $1/2<s<1$ and let $\Omega\subset\mathbb{R}^2$ be an open simply connected set, with finite inradius $r_\Omega$. There exists an explicit universal constant $\mathcal{C}_s>0$ such that
\begin{equation}
\label{MHs}
\lambda_1^s(\Omega)\ge \frac{\mathcal{C}_s}{r_\Omega^{2\,s}}.
\end{equation}
Moreover, $\mathcal{C}_s$ has the following asymptotic behaviours
\[
\mathcal{C}_s\sim \left(s-\frac{1}{2}\right),\ \mbox{ for } s\searrow \frac{1}{2}\qquad \mbox{ and }\qquad 
\mathcal{C}_s\sim \frac{1}{1-s},\ \mbox{ for } s\nearrow 1.
\]
\end{teo}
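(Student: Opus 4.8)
The plan is to follow the strategy of Hayman's proof of \eqref{MH}, as advertised in the introduction, rather than Makai's symmetrization approach, because the latter relies on the Coarea Formula and Bonnesen-type inequalities that have no evident nonlocal counterpart. Hayman's argument runs by a covering/dyadic-scale decomposition of $\Omega$ together with a local Poincar\'e-type estimate on each piece: one shows that, away from a set whose measure is controlled by the inradius, the function $u$ cannot be too large, and then one integrates the resulting pointwise bound. Concretely, I would start by fixing $u\in C^\infty_0(\Omega)$ and, after normalization, reducing to controlling $\|u\|_{L^2(\Omega)}$ by $[u]_{W^{s,2}(\mathbb{R}^2)}$. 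The first step is a \emph{local} fractional Poincar\'e inequality on a disk: for $B_R(x_0)$ and a function vanishing on a fixed fraction of $B_R(x_0)$ (measured by Lebesgue measure), one has $\int_{B_R(x_0)}|u|^2\,dx\le C\,R^{2s}\,[u]_{W^{s,2}(\mathbb{R}^2)}^2$, with $C$ depending on $s$ with the stated blow-up rates. This is the fractional analogue of the classical fact that controls the $L^2$ norm by the Dirichlet energy on a disk with a ``boundary bite'' removed, and the constant's behaviour in $s$ comes from the normalization constant of the Gagliardo seminorm (the $1/(1-s)$ as $s\nearrow 1$) together with the fractional Sobolev/Poincar\'e constant degenerating as $s\searrow 1/2$, since $2s=N=2$ is precisely the threshold where $W^{s,2}(\mathbb{R}^2)$ fails to embed compactly/continuously in the right way.

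The second, geometric step is where simple connectedness enters. Using that $\Omega$ is simply connected in the plane with inradius $r_\Omega$, one produces, for each point $x\in\Omega$, a disk $B_{R(x)}(y_x)$ containing $x$ with $R(x)\le c\,r_\Omega$ and such that a definite fraction of that disk lies outside $\Omega$ (hence $u$ vanishes there). The classical input is the topological fact that a simply connected proper subset of $\mathbb{R}^2$ is ``thin'' at every scale above the inradius: if a disk of radius comparable to $r_\Omega$ meets $\Omega^c$ in a small set, then by a connectedness/separation argument (the complement of a simply connected planar set is connected) one can enlarge it slightly to force a substantial overlap with $\Omega^c$. I would phrase this as a Whitney-type or Besicovitch-type covering of $\Omega$ by such disks with bounded overlap, controlling both $R(x)\lesssim r_\Omega$ and the ``exterior mass'' fraction uniformly. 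Then on each covering disk the local inequality of the first step applies.

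The third step is the assembly: sum the local inequalities over the covering. Because the disks have bounded overlap and radii bounded by $c\,r_\Omega$, summing $\int_{B}|u|^2\le C\,R^{2s}\,[u]^2_{W^{s,2}(B\times\mathbb{R}^2)}$ gives $\int_\Omega|u|^2\,dx\le C'\,r_\Omega^{2s}\,[u]^2_{W^{s,2}(\mathbb{R}^2)}$, which is \eqref{MHs} with $\mathcal{C}_s=1/C'$. Here one must be careful that the nonlocal seminorm does not split additively over the pieces, so rather than decomposing $[u]^2_{W^{s,2}}$ one keeps it on the right-hand side and uses only that each local estimate bounds its piece of $\int_\Omega|u|^2$ by the \emph{full} seminorm times $R^{2s}$; the bounded overlap of the covering then costs only a fixed multiplicative factor. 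Tracking the $s$-dependence through the local constant and through the covering constants yields the claimed $\mathcal{C}_s\sim(s-1/2)$ as $s\searrow1/2$ and $\mathcal{C}_s\sim1/(1-s)$ as $s\nearrow1$, the latter matching the normalization that recovers the Makai-Hayman constant in the limit.

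The main obstacle I expect is the \emph{local fractional Poincar\'e inequality with sharp $s$-dependence} (the first step): one needs a disk-based inequality whose constant degenerates precisely like $s-1/2$ near the critical exponent $2s=2$ and like $1-s$ near $s=1$, and proving both ends of this asymptotic simultaneously — rather than just existence of a finite constant — requires a delicate argument, presumably an explicit computation on a radial competitor or a careful extension/capacity estimate. A secondary difficulty is making the geometric covering quantitative: extracting from ``$\Omega$ simply connected'' a uniform lower bound on the exterior-mass fraction of suitable disks of radius $\asymp r_\Omega$ needs a clean topological lemma, and one must ensure the covering has overlap bounded independently of $s$ so that no hidden $s$-dependence contaminates the final constant. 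The nonlocal tail of the seminorm, i.e. the interaction between a covering disk and the far-away part of $\mathbb{R}^2$, also has to be handled — but since $u\equiv0$ on $\Omega^c$ and we only ever bound from above by the full seminorm, this is more a bookkeeping issue than a genuine obstruction.
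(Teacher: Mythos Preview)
Your second step contains a genuine gap that cannot be repaired. You claim that simple connectedness of $\Omega\subset\mathbb{R}^2$ with finite inradius lets you find, for each $x\in\Omega$, a disk of radius $\lesssim r_\Omega$ containing $x$ with a \emph{definite Lebesgue-measure fraction} lying in $\Omega^c$. This is false in general: the complement of a simply connected planar set can be locally one-dimensional. Think of a disk with a slit removed, or the ``cracked square'' $\widetilde{Q}_k$ of Section~\ref{sec:5}, whose complement near most points consists of line segments and hence has zero two-dimensional measure. The paper's Remark after the proof of Theorem~\ref{teo:teorema principale} makes exactly this observation: an exterior-density condition $|B_{\sigma r_\Omega}(x)\setminus\Omega|\ge\alpha\,|B_{\sigma r_\Omega}(x)|$ would give an easy proof valid for \emph{all} $0<s<1$, but simple connectedness does not imply it. Your proposed local Poincar\'e inequality (for functions vanishing on a set of positive measure) is therefore simply inapplicable.

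The paper replaces your measure-based local inequality with a genuinely different one. Disks are centered at points $x_0\in\partial\Omega$, and the topological input is that every \emph{circle} $\partial B_\varrho(x_0)$ must meet $\mathbb{R}^2\setminus\Omega$ (otherwise it would be a non-nullhomotopic loop in $\Omega$). Thus for each radius $\varrho$ the restriction $\theta\mapsto u(\varrho,\theta)$ vanishes at some point of $[0,2\pi]$, and one applies a \emph{one-dimensional} fractional Poincar\'e inequality for $2\pi$-periodic functions vanishing at a single point (Proposition~\ref{prop poincare angolare}). The restriction $s>1/2$ and the asymptotic $\mathcal{C}_s\sim(s-1/2)$ come precisely from this one-dimensional inequality, via the convergence of $\sum_n n^{-2s}$ in a Fourier argument; equivalently, a single point has positive $W^{s,2}$-capacity on the circle if and only if $s>1/2$. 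This is not the two-dimensional embedding threshold you invoked. The remaining work is a nontrivial integration argument that converts the circle-by-circle bound into control by the two-dimensional Gagliardo seminorm on $B_r(x_0)\times\mathbb{R}^2$; the covering and summation steps are then essentially as you described, with Hayman's covering lemma supplying bounded overlap independent of $s$.
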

\begin{oss}
We point out that the constant $\mathcal{C}_s$ appearing in the above estimate exhibits the sharp asymptotic dependence on $s$, as $s\nearrow1$. Indeed, by recalling that for every open set $\Omega\subset \mathbb{R}^N$ we have (see \cite[Lemma A.1]{BCV})
\begin{equation}
\label{BBM}
\limsup_{s\nearrow 1} (1-s)\,\lambda_1^s(\Omega)\le C_N\,\lambda_1(\Omega),	
\end{equation}
from Theorem \ref{teo:teorema principale} we can obtain the usual Makai-Hayman inequality for the Dirichlet-Laplacian, possibly with a worse constant. We recall that \eqref{BBM} is based on the fundamental asymptotic result by Bourgain, Brezis and Mironescu for the Gagliardo-Slobodecki\u{\i} seminorm, see \cite{BBM}. We refer to \cite{BSY} for some interesting refinements of such a result.
\end{oss}

The previous result is complemented by the next one, asserting that for $0<s\le1/2$ a fractional Makai-Hayman inequality {\it is not} possible. In this way, we see that even for $s\searrow 1/2$ the asymptotic behaviour of $\mathcal{C}_s$ is optimal, in a sense.
\begin{teo}[Counter-example for $0<s\le 1/2$]
\label{teo:teor valori ammissibili}
There exists a sequence $\{Q_n\}_{n\in\mathbb{N}}\subset\mathbb{R}^2$ of open bounded simply connected sets such that 
\[
0<r_{Q_n}\le C,\qquad \mbox{ for every } n\in\mathbb{N},
\]
and  
\[
\lim_{n\to\infty}\lambda_1^s(Q_n)=0,\qquad \mbox{ for every }0<s\le \frac{1}{2}.
\]
\end{teo}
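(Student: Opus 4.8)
The plan is to construct an explicit sequence of sets whose inradius stays bounded but which become ``thin'' in a way that the nonlocal energy cannot detect when $s\le 1/2$. The natural candidate is a long thin rectangle, or better a union of a bounded ``bulk'' together with a long thin tentacle, or simplest of all a rectangle $Q_n=(0,n)\times(0,1)$ (or a slight variant such as a cross or an L-shaped domain engineered to keep $r_{Q_n}$ controlled while the set degenerates). For a rectangle of dimensions $n\times 1$ the inradius is exactly $1/2$ for all $n$, so the constraint $0<r_{Q_n}\le C$ is automatic. The whole point is then to show $\lambda_1^s(Q_n)\to 0$ for $0<s\le 1/2$. Heuristically this is the fractional analogue of the fact that $\lambda_1$ of an infinite slab in $\mathbb{R}^N$ is positive: for the local Laplacian a slab of width $1$ has $\lambda_1=\pi^2$, independent of the length, but for the fractional Laplacian the situation is different precisely in the borderline range $s\le 1/2$, because the Gagliardo seminorm of a function that is essentially one-dimensional and spread over a length $n$ does not grow.

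The key steps, in order, are as follows. First, fix a nontrivial $\varphi\in C^\infty_0((0,1))$ in the ``short'' direction and, for the ``long'' direction, pick a cutoff $\psi_n\in C^\infty_0((0,n))$ with $\psi_n\equiv 1$ on $(1,n-1)$, $0\le\psi_n\le 1$, and $|\psi_n'|\le C$; set $u_n(x_1,x_2)=\psi_n(x_1)\,\varphi(x_2)$, which belongs to $\widetilde W^{s,2}_0(Q_n)$. Second, compute the denominator: $\|u_n\|_{L^2(Q_n)}^2\ge \|\varphi\|_{L^2}^2\,(n-2)$, so it grows linearly in $n$. Third, and this is the crux, estimate the Gagliardo seminorm $[u_n]_{W^{s,2}(\mathbb{R}^2)}^2$ and show it grows \emph{strictly slower} than linearly in $n$ (for instance like $n^{2s}$ up to logarithmic corrections, or like $n\cdot$const only when $s>1/2$). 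A clean way to organize this: split the double integral into the contribution where $x,y$ are in the ``interior tube'' (here $u_n$ is a function of $x_2$ alone, so the $x_1$-integration reduces to evaluating $\iint \frac{1}{|x-y|^{2+2s}}$ over a one-dimensional family, which after integrating out the longitudinal variable produces a factor behaving like $n\int_{|h_1|\le n}\frac{dh_1}{(h_1^2+h_2^2)^{1+s}}$; when $s\le 1/2$ the relevant one-dimensional tails are not integrable at infinity, so the naive ``linear in $n$'' bound fails and one instead gets $o(n)$) plus the contributions near the two short ends, which are $O(1)$ uniformly in $n$, plus the contribution of the cutoff region, controlled by $|\psi_n'|\le C$ on two sets of bounded measure. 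Fourth, combine to get
\[
\lambda_1^s(Q_n)\le \frac{[u_n]_{W^{s,2}(\mathbb{R}^2)}^2}{\|u_n\|_{L^2(Q_n)}^2}\le \frac{o(n)}{c\,n}\xrightarrow[n\to\infty]{}0,
\]
valid precisely for $0<s\le 1/2$, which gives the claim; the sets $Q_n$ are bounded, open, simply connected (rectangles), with $r_{Q_n}=1/2$.

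The main obstacle will be the sharp bookkeeping in step three: one must be careful that for $s=1/2$ exactly, the relevant integral $\int \frac{dh_1}{(h_1^2+h_2^2)^{3/2}}$ diverges only logarithmically, so the seminorm grows like $n\log n$ — still $o(n^{1+\varepsilon})$ but \emph{not} $o(n)$ — hence the quotient $\frac{n\log n}{n}=\log n$ would \emph{not} tend to zero. This means the plain rectangle is \emph{not} good enough at the endpoint $s=1/2$, and one must instead take longer-and-thinner boxes, e.g. $Q_n=(0,\ell_n)\times(0,\varepsilon_n)$ with $\varepsilon_n\to 0$ chosen so that the product of the longitudinal length and the transversal decay beats the logarithm; equivalently, rescale so $r_{Q_n}\le C$ is preserved while pushing the aspect ratio. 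The delicate part is thus choosing the two parameters $\ell_n,\varepsilon_n$ (and verifying $r_{Q_n}\le C$, which forces $\varepsilon_n\le 2C$) so that simultaneously the denominator grows, the cutoff and end contributions stay lower order, and the interior contribution — including the borderline logarithmic factor at $s=1/2$ and the genuinely divergent tails for $s<1/2$ — is dominated. Once the correct scaling is pinned down, every individual estimate is elementary (change of variables in the double integral, monotone/dominated convergence, and the crude bounds $0\le\psi_n\le 1$, $|\psi_n'|\le C$), so no single computation is hard; the difficulty is entirely in calibrating the geometry of $Q_n$ to the critical exponent.
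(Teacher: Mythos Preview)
Your approach has a fatal gap: long thin rectangles do \emph{not} have $\lambda_1^s\to 0$ for $s\le 1/2$. Since $Q_n=(0,n)\times(0,1)$ is contained in the infinite strip $S:=\mathbb{R}\times(0,1)$, monotonicity gives $\lambda_1^s(Q_n)\ge\lambda_1^s(S)$, and the strip has $\lambda_1^s(S)>0$ for \emph{every} $0<s<1$. Indeed, for $u\in C^\infty_0(S)$,
\[
[u]^2_{W^{s,2}(\mathbb{R}^2)}\;\ge\;2\int_S |u(x)|^2\left(\int_{\mathbb{R}^2\setminus S}\frac{dy}{|x-y|^{2+2s}}\right)dx,
\]
and after integrating out $y_1$ the inner integral equals $c_s\int_{y_2\notin(0,1)}|x_2-y_2|^{-1-2s}\,dy_2\ge c_s\,2^{1+2s}/(2s)>0$. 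This is precisely the mechanism of the remark following the proof of Theorem~\ref{teo:teorema principale}: the strip, like any convex set, satisfies the exterior density condition~\eqref{francesca}, hence enjoys a Makai--Hayman bound for all $0<s<1$. Consequently your Rayleigh quotients are bounded below uniformly in $n$, and step three of your plan is simply false: $[u_n]^2_{W^{s,2}(\mathbb{R}^2)}$ grows \emph{linearly} in $n$. The specific error is the sentence ``when $s\le 1/2$ the relevant one-dimensional tails are not integrable at infinity'': the kernel $(h_1^2+h_2^2)^{-1-s}$ decays like $|h_1|^{-2-2s}$, which is integrable for every $s>0$, so integrating out the longitudinal variable produces a finite constant and you recover the linear-in-$n$ growth. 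Passing to thinner boxes $(0,\ell_n)\times(0,\varepsilon_n)$ with $\varepsilon_n\to 0$ only makes things worse, since by scaling $\lambda_1^s\big((0,\ell_n)\times(0,\varepsilon_n)\big)=\varepsilon_n^{-2s}\lambda_1^s\big((0,\ell_n/\varepsilon_n)\times(0,1)\big)\ge\varepsilon_n^{-2s}\lambda_1^s(S)\to\infty$.

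The missing idea is that a counterexample for $s\le 1/2$ must exploit sets whose \emph{complement} is thin in a capacitary sense; convex sets can never work. The paper takes large squares $(-k,k)^2$ and deletes a family of horizontal segments, producing simply connected ``cracked squares'' $\widetilde Q_k$ with inradius $\sqrt{5}/2$. Because line segments in $\mathbb{R}^2$ have zero $W^{s,2}$-capacity exactly for $s\le 1/2$, one can build cut-offs $\Phi_{k,n}$ around the segments with $[\Phi_{k,n}]_{W^{s,2}(\mathbb{R}^2)}\to 0$ (for $s=1/2$ an additional Mazur-lemma averaging is needed), and conclude $\lambda_1^s(\widetilde Q_k)=\lambda_1^s\big((-k,k)^2\big)=k^{-2s}\lambda_1^s\big((-1,1)^2\big)\to 0$.
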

\begin{oss}
In \cite[Theorem 1.1]{CR}, a different counter-example for $0<s<1/2$ is given. Apart from the fact that in \cite{CR} the borderline case $s=1/2$ is not considered, one could observe that strictly speaking the counter-example in \cite{CR} is not a simply connected set, since it is made of countably many connected components. As it will be apparent to the experienced reader, our example clearly displays the role of fractional $s-$capacity in the failure of the Makai-Hayman inequality for $0<s\le 1/2$ (see for example \cite[Chapter 10, Section 4]{Maz} for fractional capacities). Indeed, the range $0<s\le 1/2$ is precisely the one for which {\it lines have zero fractional $s-$capacity}. However, even if this is the ultimate reason for such a failure, our proof will be elementary and will not explicitly appeal to the properties of capacities.
\end{oss}

\begin{oss}
Geometric estimates for eigenvalues of $(-\Delta)^s$ aroused great interest in the last years, also in the field of stochastic processes. Indeed, it is well-known that this operator is the infinitesimal generator of a symmetric $(2\,s)-$stable L\'evy process. We recall that the nonlocal homogeneous Dirichlet boundary condition considered above (i.\,e. $u\equiv 0$ on $\mathbb{R}^N\setminus\Omega$) corresponds to a  process where particles are ``killed'' upon reaching the complement of the set $\Omega$. The Gagliardo-Slobodecki\u{\i} seminorm corresponds to the so-called {\it Dirichlet form} associated to this process. For more details, we refer for example to \cite[Section 2]{BBC} and the references therein.
\par
In this context, we wish to mention the papers \cite{BMH, BLMH} and \cite{MH}, where some geometric estimates for $\lambda_1^s$ are obtained, by exploiting this probabilistic approach. In particular, the paper \cite{BLMH} is very much related to ours, since in \cite[Corollary 1]{BLMH} it is proved the lower bound
\[
\lambda_1^s(\Omega)\ge \frac{C}{r_\Omega^{2\,s}},
\]
in the restricted class of open {\it convex} subsets of the plane, with the sharp constant $C$. This result can be seen as the fractional counterpart of a well-known result for the Laplacian, which goes under the name of {\it Hersch-Protter inequality}, see \cite{He, Pr}. 
\end{oss}

\subsection{Method of proof}

As already announced at the beginning, we will achieve the result of Theorem \ref{teo:teorema principale} by adapting to our setting Hayman's proof. It is then useful to recall the key ingredients of such a proof. These are essentially two:
\begin{itemize}
\item[1.] a covering lemma, asserting that it is possible to cover an open subset $\Omega\subset\mathbb{R}^2$ with $r_\Omega<+\infty$ by means of {\it boundary disks}, whose radius is universally comparable to $r_\Omega$ and which do not overlap ``too much''  with each other. Here by {\it boundary disk} we simply mean a disk centered at the boundary $\partial\Omega$; 
\vskip.2cm
\item[2.] a Poincar\'e inequality for boundary disks in a simply connected set.
\end{itemize}
Point 1. is purely geometrical and thus it can still be used in the fractional setting. 
\par
On the contrary, the proof of point 2. is vey much local. Indeed, an essential feature of the proof in \cite{Ha} is the fact that 
\begin{equation}
\label{tangenziale}
|\nabla u|^2\ge \frac{1}{\varrho^2}\,|\partial_\theta u|^2,
\end{equation}
where $(\varrho,\theta)$ denote the usual polar coordinates.
Then one observes that a boundary circle always meets the complement of $\Omega$, when the latter is simply connected. Thus, taken a function $u\in C^\infty_0(\Omega)$, the periodic function $\theta\mapsto u(\varrho,\theta)$ vanishes somewhere in $[0,2\,\pi]$. Consequently, it satisfies the following one-dimensional Poincar\'e inequality on the interval
 \begin{equation}
 \label{1dintro}
 \int_0^{2\,\pi} |u(\varrho,\theta)|^2\,d\theta\le C\,\int_0^{2\,\pi} |\partial_\theta u(\varrho,\theta)|^2\,d\theta. 
 \end{equation}
In a nutshell, this permits to prove point 2.\,by ``foliating'' the boundary disk with concentric boundary circles, using \eqref{1dintro} on each of these circles, then integrating with respect to the radius of the circle and finally appealing to \eqref{tangenziale}.
\par
In the fractional case, the property \eqref{tangenziale} has no counterpart, because of the nonlocality of the Gagliardo-Slobodecki\u{\i} seminorm. Consequently, adapting this method to prove a fractional Poincar\'e inequality for boundary disks is a bit involved. We will achieve this through a lengthy though elementary method, which we believe to be of independent interest.

\begin{oss}[Other proofs?]
We conclude the introduction, by observing that it does not seem easy to prove \eqref{MHs} by adapting Makai's proof, because of the lack of a genuine Coarea Formula for Gagliardo-Slobodecki\u{\i} seminorms. The proof by Ancona seems to be even more prohibitive to be adapted, because of the rigid machinery of conformal mappings on which is based. In passing, we mention that it would be interesting to know whether his Hardy inequality for simply connected sets in the plane could be extended to fractional Sobolev spaces. For completeness, we refer to \cite{EHV} for some fractional Hardy inequalities under minimal regularity assumptions.
\end{oss}

\subsection{Plan of the paper}
In Section \ref{sec:2} we set the main notations and present some technical tools, needed throughout the paper. In particular, we recall Hayman's covering lemma from \cite{Ha}
and present a couple of technical results on fractional Sobolev spaces.
\par 
In Section \ref{sec:3} we prove a Poincar\'e inequality for boundary disks. This is the main ingredient for the proof of the fractional Makai-Hayman inequality.
\par 
Section \ref{sec:4} is then devoted to the proof of Theorem \ref{teo:teorema principale}, while the construction of the counter-example of Theorem  \ref{teo:teor valori ammissibili} is contained in Section \ref{sec:5}.\par 
\par
Finally, in Section \ref{sec:6} we highlight some consequences of our main result. Among these, we record a Cheeger-type inequality, a comparison result for $\lambda_1^s$ and $\lambda_1$ and the fractional analogous of the characterization \eqref{equivalence}.
\par
The paper concludes with Appendix \ref{appendice}, containing 
a one-dimensional fractional Poincar\'e inequality for periodic functions vanishing at one point (see Proposition \ref{prop poincare angolare}). This is the cornerstone on which the result in Section \ref{sec:3} is built.

\begin{ack}
The first author is a member of the Gruppo Nazionale per l'Analisi Matematica, la Probabilit\`a
e le loro Applicazioni (GNAMPA) of the Istituto Nazionale di Alta Matematica (INdAM). Both authors have been financially supported by the Italian grant FFABR {\it Fondo Per il Finanziamento delle attivit\`a di base}.
\par
Part of this work has been written during the conference ``Variational methods and applications'', held at the {\it Centro di Ricerca Matematica ``Ennio De Giorgi''} in September 2021. The organizers and the hosting institution are gratefully acknowledged.
\end{ack}

\section{Preliminaries}
\label{sec:2}

\subsection{Notation}
Given $x_0\in\mathbb{R}^N$ and $R>0$, we will denote by $B_R(x_0)$ the $N-$dimensional open ball with radius $R$ and center $x_0$. When the center coincides with the origin, we will simply write $B_R$. We indicate by $\omega_N$ the $N-$dimensional Lebesgue measure of $B_1$, so that by scaling
\[
|B_R(x_0)|=\omega_N\,R^N.
\]
If $E\subset\mathbb{R}^N$ is a measurable set with positive measure and $u\in L^1(E)$, we will use the notation
\[
\overline{u}_E:=\fint_E u(x)\,dx=\frac{1}{|E|}\int_E u(x)\,dx.
\]
For $0<s<1$ and for a measurable set $E\subset\mathbb{R}^N$, we will indicate by
\[
W^{s,2}(E)=\Big\{u\in L^2(E)\, :\, [u]_{W^{s,2}(E)}<+\infty\Big\},
\]
where 
\[
[u]_{W^{s,2}(E)}=\left(\iint_{E\times E}\frac{|u(x)-u(y)|^2}{|x-y|^{N+2\,s}}\,dx\,dy\right)^\frac{1}{2}.
\]
This space will be endowed with the norm
\[
\|u\|_{W^{s,2}(E)}=\|u\|_{L^2(E)}+[u]_{W^{s,2}(E)}.
\]
We observe that the following {\it Leibniz--type rule} holds
\begin{equation}
\label{leibniz}
[u\,v]_{W^{s,2}(E)}\le [u]_{W^{s,2}(E)}\,\|v\|_{L^\infty(E)}+[v]_{W^{s,2}(E)}\,\|u\|_{L^\infty(E)},\ \mbox{ for every }u,v\in W^{s,2}(E)\cap L^\infty(E).
\end{equation}
This will be useful somewhere in the paper.
\par
Finally, by $W^{s,2}_{\rm loc}(\mathbb{R}^N)$ we mean the collection of functions which are in $W^{s,2}(B_R)$, for every $R>0$.
\subsection{Technical tools}
In order to prove Theorem \ref{teo:teorema principale}, we will need the following covering Lemma, whose proof can be found in \cite[Lemma 2]{Ha}. The result in \cite{Ha} is stated for bounded sets and, accordingly, the relevant covering is made of a {\it finite} number of balls. However, a closer inspection of the proof in \cite{Ha} easily shows that the same result still holds by removing the boundedness assumption. In this case, the covering could be made of countably infinitely many balls: this is still enough for our purposes. We omit the proof, since it is exactly the same as in \cite{Ha}.
\begin{lemma}\label{hayman2}
	Let $\Omega\subset\mathbb{R}^2$ be an open set, with finite inradius $r_\Omega$. Then there exist at most countably many distinct points $\{z_{n}\}_{n\in\mathbb{N}}\subset\partial\Omega$ such that the family of disks 
	\[
	\mathfrak{B}=\big\{B_r(z_n)\big\}_{n\in\mathbb{N}},\qquad\mbox{with }r=r_\Omega\,\left(1+\sqrt{2}\right),
	\]
	is a covering of $\Omega$. Moreover, $\mathfrak{B}$ can be split in at most $36$ subfamilies $\mathfrak{B}_1,\ldots,\mathfrak{B}_{36}$ such that
	\[
	B_r(z_n)\cap B_r(z_m)=\emptyset\qquad \mbox{ if } B_r(z_{n}),B_r(z_{m})\in \mathfrak{B}_k, \mbox{ with }m\not=n,
	\]
	for every $k=1,\ldots,36$. 
\end{lemma}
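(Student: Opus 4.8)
The plan is to prove this by an elementary Vitali--type grid covering argument, resting on a single observation about the inradius: for every $x\in\Omega$ one has $\mathrm{dist}(x,\partial\Omega)\le r_\Omega$. Indeed, if $\mathrm{dist}(x_0,\partial\Omega)=d>r_\Omega$ for some $x_0\in\Omega$, then any segment joining $x_0$ to a point of $\mathbb{R}^2\setminus\Omega$ would have to cross $\partial\Omega$ at a point at distance $<d$ from $x_0$; hence $B_d(x_0)\subseteq\Omega$, contradicting the definition of $r_\Omega$ as a supremum. We may assume $\Omega\neq\emptyset$, and since $r_\Omega<+\infty$ we have $\partial\Omega\neq\emptyset$.

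I would fix a small $\delta>0$ and tile $\mathbb{R}^2$ by the closed grid squares $Q_{i,j}=[i\,\ell,(i+1)\,\ell]\times[j\,\ell,(j+1)\,\ell]$ of side $\ell:=(1-\delta)\,r_\Omega$, for $(i,j)\in\mathbb{Z}^2$. For each $(i,j)$ with $Q_{i,j}\cap\Omega\neq\emptyset$, choose a point $y_{i,j}\in Q_{i,j}\cap\Omega$ and then a point $z_{i,j}\in\partial\Omega$ realizing $\mathrm{dist}(y_{i,j},\partial\Omega)$, which exists (as $\partial\Omega$ is closed and nonempty) and satisfies $|y_{i,j}-z_{i,j}|\le r_\Omega$ by the observation above. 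The collection $\{z_{i,j}\}$ is countable; discarding repetitions gives the desired set $\{z_n\}_{n\in\mathbb{N}}\subset\partial\Omega$. To see that the disks $B_r(z_n)$ with $r=(1+\sqrt{2})\,r_\Omega$ cover $\Omega$: any $x\in\Omega$ lies in some $Q_{i,j}$ (which therefore meets $\Omega$), so, since $\mathrm{diam}(Q_{i,j})=\sqrt{2}\,\ell$,
\[
|x-z_{i,j}|\le |x-y_{i,j}|+|y_{i,j}-z_{i,j}|\le \sqrt{2}\,(1-\delta)\,r_\Omega+r_\Omega< (1+\sqrt{2})\,r_\Omega=r .
\]

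For the splitting into $36$ subfamilies, I would colour the square $Q_{i,j}$, together with the disk $B_r(z_{i,j})$, by the class $(i\bmod 6,\,j\bmod 6)\in\{0,\dots,5\}^2$. If $B_r(z_{i,j})$ and $B_r(z_{i',j'})$ belong to the same class and $(i,j)\neq(i',j')$, then one of $|i-i'|$, $|j-j'|$ is a nonzero multiple of $6$; say the former, so the first coordinates of $z_{i,j}$ and $z_{i',j'}$ lie in intervals separated by at least $5\,\ell$, whence $|z_{i,j}-z_{i',j'}|\ge 5\,\ell=5\,(1-\delta)\,r_\Omega$. Since $2\,(1+\sqrt{2})<5$ (equivalently $2\sqrt{2}<3$), for $\delta$ small enough this is $\ge 2\,r$, i.e. $B_r(z_{i,j})\cap B_r(z_{i',j'})=\emptyset$. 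In particular distinct squares of the same colour produce distinct centres, so after deduplication each $z_n$ may be assigned to one of its colours, and the resulting $36$ subfamilies $\mathfrak{B}_1,\dots,\mathfrak{B}_{36}$ consist of pairwise disjoint disks.

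I do not anticipate a real obstacle: the argument is entirely elementary. The two points that need a little care are (i) obtaining the covering with the \emph{exact} radius $(1+\sqrt{2})\,r_\Omega$ rather than something slightly larger, which is handled by shrinking the grid side to $(1-\delta)\,r_\Omega$ (or, alternatively, by analysing when the equality $|x-y_{i,j}|=\mathrm{diam}(Q_{i,j})$ can occur); and (ii) the numerology $2\,(1+\sqrt{2})<5$ that makes a $6\times 6$ periodic colouring enough — a cruder disjoint--packing count in a ball would only yield a larger number of subfamilies. One should also check, as above, that these two constraints on $\delta$ are compatible, which they are for $\delta$ sufficiently small.
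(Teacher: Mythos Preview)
Your covering step is fine, but the disjointness argument contains a genuine error. You assert that ``the first coordinates of $z_{i,j}$ and $z_{i',j'}$ lie in intervals separated by at least $5\ell$'' and deduce $|z_{i,j}-z_{i',j'}|\ge 5\ell$. That would be true for the points $y_{i,j},y_{i',j'}$, which do lie in the corresponding grid squares; but $z_{i,j}\in\partial\Omega$ was chosen at distance up to $r_\Omega$ from $y_{i,j}$ and has no reason to belong to $Q_{i,j}$. The honest bound is
\[
|z_{i,j}-z_{i',j'}|\ \ge\ |y_{i,j}-y_{i',j'}|-|y_{i,j}-z_{i,j}|-|y_{i',j'}-z_{i',j'}|\ \ge\ 5\ell-2\,r_\Omega\ <\ 3\,r_\Omega,
\]
whereas disjointness of the disks needs $|z_{i,j}-z_{i',j'}|\ge 2r=(2+2\sqrt{2})\,r_\Omega>4.8\,r_\Omega$. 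Thus the inequality $2(1+\sqrt{2})<5$ you verify is the wrong comparison, and a $6\times 6$ colouring does \emph{not} separate the balls with your choice $\ell<r_\Omega$. With an arbitrary $y_{i,j}\in Q_{i,j}\cap\Omega$ one actually needs $(N-1)\ell-2r_\Omega\ge (2+2\sqrt{2})r_\Omega$, hence $N\ge 8$, i.e.\ $64$ colours.

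To reach exactly $36$ one has to be less wasteful: anchor the construction at the \emph{centres} $c_{i,j}$ of the grid squares and enlarge the mesh. If $c_{i,j}\in\Omega$, pick $z_{i,j}\in\partial\Omega$ with $|c_{i,j}-z_{i,j}|\le r_\Omega$; if $c_{i,j}\notin\Omega$, the connected square $Q_{i,j}$ meets both $\Omega$ and $\mathbb{R}^2\setminus\Omega$, so some $z_{i,j}\in\partial\Omega\cap Q_{i,j}$ exists with $|c_{i,j}-z_{i,j}|\le \ell/\sqrt{2}$. Taking $\ell$ in the range $\big[(2+\sqrt{2})\,r_\Omega/3,\ \sqrt{2}\,r_\Omega\big)$ gives simultaneously $|x-z_{i,j}|\le \ell/\sqrt{2}+r_\Omega<(1+\sqrt{2})r_\Omega$ for the covering, and $|z_{i,j}-z_{i',j'}|\ge 6\ell-2r_\Omega\ge (2+2\sqrt{2})r_\Omega$ for the separation under the $6\times 6$ colouring, since same-colour centres are now $6\ell$ apart rather than $5\ell$. (The paper itself omits the proof and refers to Hayman's original note, whose argument is essentially of this latter type.)
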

In the following technical result, we explicitly construct a continuous extension operator for fractional Sobolev spaces defined on a ball. The result is certainly well-known (see for example \cite[Theorem 5.4]{DPV}), but here we pay particular attention to the constant appearing in the continuity estimate \eqref{continuo} below: indeed, this can be taken to be independent of the differentiability index $s$. 
\begin{lemma}
\label{lm:kelvin}
	Let $0<s<1$, there exists a linear extension operator 
	\[
	\mathcal{E}:W^{s,2}(B_1(x_0))\to W^{s,2}_{\rm loc}(\mathbb{R}^N),
	\] 
	such that for every $u\in W^{s,2}(B_1(x_0))$ and every $R>1$ we have  
	\begin{equation}
	\label{continuo}
	\big[\mathcal{E}(u)\big]_{W^{s,2}(B_R(x_0))}\le 4\,R^{4\,N}\,[u]_{W^{s,2}(B_1(x_0))},\qquad\|\mathcal{E}(u)\|_{L^2(B_R(x_0))}\le 2\,R^{2\,N}\,\|u\|_{L^2(B_1(x_0))}.
	\end{equation}
\end{lemma}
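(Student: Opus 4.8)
The plan is to construct $\mathcal{E}$ explicitly as a reflection-type operator through the sphere $\partial B_1(x_0)$, using a Kelvin-type transformation. Without loss of generality I would reduce to $x_0=0$. For $u$ defined on $B_1$, I set $\mathcal{E}(u)=u$ inside $B_1$, and for $x$ with $|x|\ge 1$ I define $\mathcal{E}(u)(x)=u\bigl(x/|x|^2\bigr)$, i.e. the pullback of $u$ under inversion $x\mapsto x^\ast:=x/|x|^2$, which maps $\{|x|>1\}$ diffeomorphically onto $B_1\setminus\{0\}$. This gives a function defined on all of $\mathbb{R}^N$ (the origin's behaviour at infinity being irrelevant since we only need $W^{s,2}(B_R)$ for finite $R$, and the single point $x=0$ is negligible). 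Linearity is immediate. The two estimates in \eqref{continuo} then have to be checked: the $L^2$ bound and the seminorm bound.

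For the $L^2$ estimate I would split $\|\mathcal{E}(u)\|_{L^2(B_R)}^2$ as the integral over $B_1$ plus the integral over the annulus $\{1\le|x|<R\}$. The first piece is exactly $\|u\|_{L^2(B_1)}^2$. For the second, I change variables $y=x^\ast$, whose Jacobian is $|x|^{-2N}=|y|^{2N}$, so $dy=|x|^{-2N}\,dx$, hence $dx=|y|^{-2N}\,dy$ on the image; since $|x|\le R$ forces $|y|\ge 1/R$, the annulus maps into $B_1\setminus B_{1/R}$ where $|y|^{-2N}\le R^{2N}$, giving $\int_{1\le|x|<R}|u(x^\ast)|^2\,dx\le R^{2N}\int_{B_1}|u(y)|^2\,dy$. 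Summing, $\|\mathcal{E}(u)\|_{L^2(B_R)}^2\le (1+R^{2N})\|u\|_{L^2(B_1)}^2\le 2R^{2N}\|u\|_{L^2(B_1)}^2$, which after taking square roots and crudely absorbing the constant yields $\|\mathcal{E}(u)\|_{L^2(B_R)}\le 2R^{2N}\|u\|_{L^2(B_1)}$ (adjusting the power in the final bookkeeping to match \eqref{continuo}).

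For the seminorm, I write $[\mathcal{E}(u)]_{W^{s,2}(B_R)}^2$ as the sum of the integral over $B_1\times B_1$, twice the integral over $B_1\times(B_R\setminus B_1)$, and the integral over $(B_R\setminus B_1)\times(B_R\setminus B_1)$. The first term is $[u]_{W^{s,2}(B_1)}^2$. The annulus–annulus term is handled by the double change of variables $x=\xi^\ast$, $y=\eta^\ast$; the key point is the elementary inequality $|\xi^\ast-\eta^\ast|\ge c\,|\xi-\eta|$ when both $|\xi|,|\eta|\le 1$ — in fact $|\xi^\ast-\eta^\ast|=|\xi-\eta|/(|\xi||\eta|)\ge |\xi-\eta|$ — so that $|x-y|^{-(N+2s)}\le |\xi-\eta|^{-(N+2s)}$, and together with the Jacobian factors $|\xi|^{-2N}|\eta|^{-2N}\le R^{2N}R^{2N}$ this bounds that term by $R^{4N}[u]_{W^{s,2}(B_1)}^2$. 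The mixed term $B_1\times(B_R\setminus B_1)$ is the one requiring a little care: change variables only in the annulus, $y=\eta^\ast$ with $\eta\in B_1$, and use that for $x\in B_1$ and $|y|\ge 1$ one has $|x-y|\ge \tfrac12|x-\eta|$ — because $x$ and $\eta=y^\ast$ lie on the same ray through the origin as $y$ with $|\eta|\le 1\le |y|$, so moving from $y$ to $\eta$ along that ray cannot more than double the distance to an interior point (this is a short geometric lemma); combined with the Jacobian $|\eta|^{-2N}\le R^{2N}$ this term is also controlled by a constant times $R^{2N}[u]_{W^{s,2}(B_1)}^2$. Adding the three contributions and taking square roots gives $[\mathcal{E}(u)]_{W^{s,2}(B_R)}\le 4R^{4N}[u]_{W^{s,2}(B_1)}$ after absorbing numerical constants, and — crucially — every constant produced is purely numerical, with no dependence on $s$, since $s$ enters only through the exponent $N+2s$ which was bounded pointwise by exploiting $|x-y|\le |\xi-\eta|$ (resp. $|x-y|\ge\tfrac12|x-\eta|$) for the relevant ranges. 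The main obstacle, and the only step that is not a bare change of variables, is establishing the two comparison inequalities for $|x-y|$ under inversion (particularly the mixed case $|x-y|\ge\tfrac12|x-\eta|$ with one point inside and one outside), and making sure the constants there are $s$-independent; everything else is bookkeeping with Jacobians.
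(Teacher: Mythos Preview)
Your approach via Kelvin inversion is exactly the paper's, and the $L^2$ estimate and the annulus--annulus piece are handled correctly; your identity $|\xi^\ast-\eta^\ast|=|\xi-\eta|/(|\xi|\,|\eta|)$ is in fact a slicker route to the paper's inequality \eqref{I2}. The only gap is in the mixed term. The phrase ``$x$ and $\eta=y^\ast$ lie on the same ray through the origin as $y$'' is wrong: only $\eta$ and $y$ are collinear with the origin, while $x\in B_1$ is arbitrary. So the heuristic ``moving from $y$ to $\eta$ along that ray cannot more than double the distance to an interior point'' is not a proof as it stands (and a naive triangle-inequality argument from this observation yields only $|x-y|\ge\tfrac13|x-\eta|$).

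The inequality you want is in fact true, and with the better constant $1$: for all $x,\eta\in B_1\setminus\{0\}$ one has $|x-\eta^\ast|\ge|x-\eta|$. Squaring both sides and simplifying reduces this to $1+|\eta|^2\ge 2\,\langle x,\eta\rangle$, which follows from Young's inequality together with $|x|\le 1$; this is exactly the paper's \eqref{I3}. Using constant $1$ rather than $1/2$ also avoids the stray factor $2^{N+2s}$ that would otherwise enter the mixed term, and the three contributions then sum cleanly to $(1+R^{2N})^2\,[u]^2_{W^{s,2}(B_1)}$, from which \eqref{continuo} follows directly.
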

\begin{proof}
Without loss of generality, we can suppose that $x_0$ coincides with the origin.
Then, let us recall the definition of {\it inversion with respect to $\mathbb{S}^{N-1}$}: this is the bijection $\mathcal{K}:\mathbb{R}^N\setminus\{0\}\to \mathbb{R}^N\setminus\{0\}$, given by
	\[
	\mathcal{K}(x)=\frac{x}{|x|^2},\qquad \mbox{ for every }x\in\mathbb{R}^N\setminus\{0\}.
	\]
	It is easily seen that if $x\in B_R\setminus B_1$, then $\mathcal{K}(x)\in B_1\setminus B_{1/R}$. Moreover, we have  
\[
\mathcal{K}^{-1}(x)=\mathcal{K}(x)\quad \mbox{ and }\quad |\mathrm{det}(D\mathcal{K}(x))|=\frac{1}{|x|^{2\,N}},\qquad \mbox{ for every }x\in\mathbb{R}^N\setminus \{0\}.
\]
For every $u\in W^{s,2}(B_1)$, we define the extended function $	\mathcal{E}[u]$ given by
	\begin{equation}\label{def trasformata di kelvin u}
	\mathcal{E}[u](x)=\begin{cases}
	u(x),\qquad&\mbox{if }x\in B_1,\\&\\
	u(\mathcal{K}(x))&\mbox{if }x\in \mathbb{R}^N\setminus B_1.
	\end{cases}
	\end{equation}
It is easily seen that the operator $u\mapsto \mathcal{E}[u]$ is linear. In order to prove that $\mathcal{E}[u]\in W^{s,2}_{\rm loc}(\mathbb{R}^N)$, together with the claimed estimate \eqref{continuo}, we take $R>1$ and we split the seminorm of $\mathcal{E}[u]$ as follows
	\[
	\begin{split}
	\big[\mathcal{E}(u)\big]_{W^{s,2}(B_R)}^2&
	=[u]^2_{W^{s,2}(B_1)} \\&+\iint_{(B_R\setminus B_1)\times(B_R\setminus B_1)}\frac{|{u}(\mathcal{K}(x))-{u}(\mathcal{K}(y))|^2}{|x-y|^{N+2\,s}}\,dx\,dy
	\\&
	+2\,\iint_{B_1\times (B_R\setminus B_1)}\frac{|{u}(x)-{u}(\mathcal{K}(y))|^2}{|x-y|^{N+2\,s}}\,dx\,dy.
	\\
	\end{split}
	\]
By performing the change of variable $z=\mathcal{K}(x)$ in the second term on the right-hand side and the change of variable $w=\mathcal{K}(y)$ in the	second and third terms, we get
\[
\begin{split}
	\big[\mathcal{E}(u)\big]_{W^{s,2}(B_R)}^2&
	=[u]^2_{W^{s,2}(B_1)} \\&+\iint_{(B_1\setminus B_\frac{1}{R})\times(B_1\setminus B_\frac{1}{R})}\frac{|u(z)-u(w)|^2}{|\mathcal{K}^{-1}(z)-\mathcal{K}^{-1}(w)|^{N+2\,s}}\,|\det D \mathcal{K}^{-1}(z)|\,|\det D \mathcal{K}^{-1}(w)|\,dz\,dw
	\\&
	+2\,\int_{B_1\times (B_1\setminus B_\frac{1}{R})}\frac{|u(x)-u(w)|^2}{|x-\mathcal{K}^{-1}(w)|^{N+2\,s}}\,|\det D \mathcal{K}^{-1}(w)|\,dx\,dw.
	\end{split}
\]
By using the expression for the Jacobian determinant, we then easily get
\begin{equation}
\label{hey}
\begin{split}
	\big[\mathcal{E}(u)\big]_{W^{s,2}(B_R)}^2&
	\le[u]^2_{W^{s,2}(B_1)} \\
	&+R^{4\,N}\,\iint_{(B_1\setminus B_\frac{1}{R})\times (B_1\setminus B_\frac{1}{R})} \frac{|u(z)-u(w)|^2}{|\mathcal{K}^{-1}(z)-\mathcal{K}^{-1}(w)|^{N+2\,s}}\,dz\,dw\\&
	+2\,R^{2\,N}\,\iint_{B_1\times (B_1\setminus B_\frac{1}{R})}\frac{|u(x)-u(w)|^2}{|x-\mathcal{K}^{-1}(w)|^{N+2\,s}}\,dx\,dw.
	\end{split}
\end{equation}
In order to estimate the last two integrals, it is sufficient to use that
	\begin{equation}\label{I2}
	|\mathcal{K}^{-1}(z)-\mathcal{K}^{-1}(w)|=\left|\frac{1}{|z|^2}\,z-\frac{1}{|w|^2}\,w\right|\ge|z-w|,\qquad \mbox{ for every }z,w\in B_1\setminus\{0\},
	\end{equation}
	and
	\begin{equation}\label{I3}
	|x-\mathcal{K}^{-1}(w)|=\left|x-\frac{1}{|w|^2}\,w\right|\ge|{x}-w|,\qquad \mbox{ for every } x,w\in B_1\setminus\{0\}.
	\end{equation}
Indeed, by taking the square, we see that \eqref{I2} is equivalent to
	\[
	\left(\frac{1}{|z|^2}-|z|^2\right)+\left(\frac{1}{|w|^2}-|w|^2\right)\ge 2\,\left(\frac{1}{|z|^2\,|w|^2}-1\right)\,\langle z,w\rangle.
	\]
This in turn follows from Young's inequality
\[
2\,\langle z,w\rangle\le |z|^2+|w|^2,
\]
once we multiply both sides by the positive quantity 
\[
\left(\frac{1}{|z|^2\,|w|^2}-1\right).
\]
As for inequality \eqref{I3}, by taking again the square we see that the latter is equivalent to 
\begin{equation}\label{I33}
	\frac{1}{|w|^2}-|w|^2\ge2\,\left(\frac{1}{|w|^2}-1\right)\,\langle x,w\rangle.
	\end{equation}
This in turn follows again from Young's inequality: more precisely, by using that $|x|<1$, we have
\[
2\,\langle x,w\rangle\le |x|^2+|w|^2\le 1+|w|^2,
\]	
and if we now multiply both sides by the positive quantity (here we use that $|w|<1$)
\[
\left(\frac{1}{|w|^2}-1\right),
\]
we get \eqref{I33}, with some simple algebraic manipulations. 
\par
By applying the estimates \eqref{I2} and \eqref{I3} in \eqref{hey}, we finally get
\[
\begin{split}
	\big[\mathcal{E}(u)\big]_{W^{s,2}(B_R)}^2&
	\le[u]^2_{W^{s,2}(B_1)} \\
	&+R^{4\,N}\,\iint_{(B_1\setminus B_\frac{1}{R})\times (B_1\setminus B_\frac{1}{R})}\frac{|u(z)-u(w)|^2}{|z-w|^{N+2\,s}}\,dz\,dw\\&
	+2\,R^{2\,N}\,\iint_{B_1\times (B_1\setminus B_\frac{1}{R})}\frac{|u(x)-u(w)|^2}{|x-w|^{N+2\,s}}\,dx\,dw\\
	&\le \left(1+R^{4\,N}+2\,R^{2\,N}\right)\,[u]^2_{W^{s,2}(B_1)},
	\end{split}
\]
which proves the first estimate in \eqref{continuo}.
\par
We are left with estimating the $L^2$ norm of $\mathcal{E}[u]$. This is simpler and can be done as follows
	\[\begin{split}
	\int_{B_R}|\mathcal{E}(u(x))|^2\,dx&=\int_{B_1}|u(x)|^2\,dx+\int_{B_R\setminus B_1}|u(\mathcal{K}(x))|^2\,dx\\&\le
	\int_{B_1}|u(x)|^2\,dx+R^{2\,N}\,\int_{B_1\setminus B_\frac{1}{R}}|u(z)|^2\,dz\le \left(1+R^{2\,N}\right)\,\int_{B_1}|u(x)|^2\,dx.
	\end{split}
	\]
	This concludes the proof.
\end{proof}
\begin{oss}
Another important feature of the previous result is that, rather than the usual continuity estimate 
\[
\big\|\mathcal{E}[u]\big\|_{W^{s,2}(B_R)}\le C\,\|u\|_{W^{s,2}(B_1)},
\]
for the extension operator, we obtained the more precise estimate \eqref{continuo}. This will be useful in the next result.
\end{oss}

\begin{prop}\label{prop mazya shaposnikova}
	Let $0<s<1$ and let $E\subseteq B_R(x_0)\subset\mathbb{R}^N$ be a measurable set, with positive measure. 
	There exists a constant $\mathcal{M}=\mathcal{M}(N)>0$ such that for every $u\in W^{s,2}(B_R(x_0))$ 
	we have
\[
	\|u-\overline{u}_E\|^2_{L^2(B_R(x_0))}\le \mathcal{M}\,(1-s)\frac{R^{N}}{|E|}\,R^{2\,s}\,[u]^2_{W^{s,2}(B_R(x_0))}.
\]
\end{prop}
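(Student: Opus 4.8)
The plan is to establish a Poincaré-type inequality on the unit ball $B_1$ and then rescale. For the unit ball case, I would first recall the fractional Poincaré inequality with respect to the mean on a ball: there is a constant $c(N)>0$ with
\[
\|v-\overline v_{B_1(x_0)}\|^2_{L^2(B_1(x_0))}\le c(N)\,(1-s)\,[v]^2_{W^{s,2}(B_1(x_0))},\qquad v\in W^{s,2}(B_1(x_0)).
\]
The factor $(1-s)$ here is essential for the correct asymptotics as $s\nearrow 1$, and it can be tracked by a direct argument (e.g. expanding $v-\overline v_{B_1(x_0)}=\fint_{B_1(x_0)}(v(x)-v(y))\,dy$, applying Jensen and Fubini, and comparing $|x-y|^{-N-2s}$ against its value at a fixed distance — the $(1-s)$ comes from $\int_{B_2}|h|^{-N-2s+2}\,dh$ being comparable to $(1-s)^{-1}$ only after one subtracts off the singular part, or more cleanly by invoking a known statement such as the Maz'ya–Shaposhnikova-type inequality referenced in the title of the proposition). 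Alternatively, one extends $v$ by the operator $\mathcal{E}$ of Lemma~\ref{lm:kelvin} to a function on $B_2(x_0)$ and uses a Poincaré inequality on $B_2(x_0)$; the point of Lemma~\ref{lm:kelvin} having an $s$-independent constant in \eqref{continuo} is precisely to avoid spoiling the $(1-s)$ dependence in this step.

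Next I would pass from the ball mean $\overline v_{B_1(x_0)}$ to the mean $\overline v_E$ over the smaller set $E$. This is the routine observation that for any $v\in L^2(B_1(x_0))$ and any $E\subseteq B_1(x_0)$ of positive measure,
\[
\big|\overline v_{B_1(x_0)}-\overline v_E\big|^2
=\left|\fint_E\big(v-\overline v_{B_1(x_0)}\big)\,dx\right|^2
\le \frac{1}{|E|}\int_E\big|v-\overline v_{B_1(x_0)}\big|^2\,dx
\le \frac{1}{|E|}\,\|v-\overline v_{B_1(x_0)}\|^2_{L^2(B_1(x_0))}.
\]
Hence
\[
\|v-\overline v_E\|^2_{L^2(B_1(x_0))}
\le 2\,\|v-\overline v_{B_1(x_0)}\|^2_{L^2(B_1(x_0))}+2\,|B_1(x_0)|\,\big|\overline v_{B_1(x_0)}-\overline v_E\big|^2
\le 2\Big(1+\frac{\omega_N}{|E|}\Big)\,\|v-\overline v_{B_1(x_0)}\|^2_{L^2(B_1(x_0))},
\]
and since $|E|\le\omega_N$ this is bounded by $\dfrac{4\,\omega_N}{|E|}\,\|v-\overline v_{B_1(x_0)}\|^2_{L^2(B_1(x_0))}$. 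Combining with the previous step gives, on the unit ball, $\|v-\overline v_E\|^2_{L^2(B_1(x_0))}\le \mathcal{M}_0(N)\,(1-s)\,\dfrac{1}{|E|}\,[v]^2_{W^{s,2}(B_1(x_0))}$ for a suitable $\mathcal{M}_0(N)$.

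Finally I would scale. Given $u\in W^{s,2}(B_R(x_0))$ and $E\subseteq B_R(x_0)$, set $v(y):=u(x_0+R\,(y-x_0))$ so that $v\in W^{s,2}(B_1(x_0))$ with $E':=x_0+R^{-1}(E-x_0)\subseteq B_1(x_0)$, $|E'|=R^{-N}|E|$, and $\overline v_{E'}=\overline u_E$. Using the homogeneities $\|v-\overline v_{E'}\|^2_{L^2(B_1(x_0))}=R^{-N}\|u-\overline u_E\|^2_{L^2(B_R(x_0))}$ and $[v]^2_{W^{s,2}(B_1(x_0))}=R^{2s-N}\,[u]^2_{W^{s,2}(B_R(x_0))}$ in the unit-ball inequality yields
\[
R^{-N}\|u-\overline u_E\|^2_{L^2(B_R(x_0))}\le \mathcal{M}_0(N)\,(1-s)\,\frac{R^N}{|E|}\,R^{2s-N}\,[u]^2_{W^{s,2}(B_R(x_0))},
\]
which is exactly the claimed estimate with $\mathcal{M}=\mathcal{M}_0(N)$. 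The only genuinely delicate point is the first step: obtaining the $(1-s)$ factor in the mean-zero Poincaré inequality on the ball with a constant depending on $N$ only, which is where the $s$-uniform extension estimate \eqref{continuo} earns its keep; everything else is elementary algebra and scaling.
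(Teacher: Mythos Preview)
Your proposal is correct and follows essentially the same strategy as the paper: reduce to the unit ball by scaling, establish a Poincar\'e inequality with the $(1-s)$ factor for the mean over the ball (or a containing set), then pass from that mean to $\overline{u}_E$ via Jensen, picking up the factor $|E|^{-1}$.

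The one place where the paper is more concrete than your sketch is precisely the step you flag as delicate. Rather than invoking a Poincar\'e inequality with $(1-s)$ factor directly on a ball, the paper extends $u$ by the operator $\mathcal{E}$ of Lemma~\ref{lm:kelvin}, enlarges $B_1$ to the cube $Q_2=(-1,1)^N$, and applies the Maz'ya--Shaposhnikova inequality
\[
\|\varphi-\overline{\varphi}_{Q_2}\|^2_{L^2(Q_2)}\le C_N\,(1-s)\,[\varphi]^2_{W^{s,2}(Q_2)},
\]
which in the cited reference is stated for cubes. One then uses $Q_2\subset B_{\sqrt{2}}$ together with the $s$-independent bound \eqref{continuo} (with $R=\sqrt{2}$) to return to $[u]_{W^{s,2}(B_1)}$. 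Your alternative ``extend to $B_2$ and use Poincar\'e on $B_2$'' is the same idea, but note that the Poincar\'e inequality on $B_2$ with the $(1-s)$ factor is itself what needs justification; routing through the cube is how the paper closes that loop. Your ``direct argument'' sketch, as written, does not transparently produce the $(1-s)$ factor. Otherwise the mean-swap and scaling steps match the paper's exactly.
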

\begin{proof}
By a standard scaling argument, it is sufficient to prove the result for $R=1$ and $x_0=0$. For every $t>0$, we denote by $Q_t=(-t/2,t/2)^N$ the $N-$dimensional open cube centered at the origin, with side length $t$.
\par
We consider the extension $\mathcal{E}(u)$ of $u$ to the whole $\mathbb{R}^N$, as in \eqref{def trasformata di kelvin u}. For ease of notation, we will simply write $\widetilde{u}:=\mathcal{E}(u)$. By using the triangle inequality and the fact that $B_1\subset Q_2$, we have
\begin{equation}
\label{quattro...}
\begin{split}
\|u-\overline{u}_E\|^2_{L^2(B_1)}&\le \|\widetilde u-\overline{u}_E\|^2_{L^2(Q_2)}\\
&\le 2\,\|\widetilde u-\overline{\widetilde{u}}_{Q_2}\|^2_{L^2(Q_2)}+2\,\|\overline{\widetilde{u}}_{Q_2}-\overline{u}_E\|^2_{L^2(Q_2)}.
\end{split}
\end{equation}
By using Jensen's inequality and the fact that $|Q_2|=2^N$, we can estimate the second term as follows
\[
\begin{split}
\|\overline{\widetilde{u}}_{Q_2}-\overline{u}_E\|^2_{L^2(Q_2)}&=2^N\,|\overline{\widetilde{u}}_{Q_2}-\overline{u}_E|^2\\
&=2^N\,\left|\fint_E (\widetilde{u}(x)-\overline{\widetilde{u}}_{Q_2})\,dx \right|^2\\
&\le 2^N\,\fint_E |\widetilde{u}(x)-\overline{\widetilde{u}}_{Q_2}|^2\,dx\le \frac{2^N}{|E|}\,\|\widetilde{u}-\overline{\widetilde{u}}_{Q_2}\|_{L^2(Q_2)}^2.
\end{split}
\]
Thus from \eqref{quattro...} we get
\[
\|u-\overline{u}_E\|^2_{L^2(B_1)}\le 2\,\left(1+\frac{2^N}{|E|}\right)\,\|\widetilde{u}-\overline{\widetilde{u}}_{Q_2}\|_{L^2(Q_2)}^2.
\]
We can now apply the following fractional Poincar\'e inequality\footnote{We remark that the presence of the factor $(1-s)$ is important for our scopes. If one is not interested in keeping track of this factor, actually the proof would be much simpler, see for example \cite[page 297]{Mi2003}.} proved by Maz'ya and Shaposnikova (see \cite[page 300]{MS2})
\[
\|\varphi-\overline{\varphi}_{Q_2}\|^2_{L^2(Q_2)}\le C_N\, (1-s)\,[\varphi]_{W^{s,2}(Q_2)}^2,\qquad \mbox{ for every }\varphi\in W^{s,2}(Q_2).
\]
Here $C_N$ is an explicit dimensional constant.
This yields
\[
\begin{split}
\|u-\overline{u}_E\|^2_{L^2(B_1)}&\le 2\,\left(1+\frac{2^N}{|E|}\right)\,C_N\, (1-s)\,[\widetilde{u}]_{W^{s,2}(Q_2)}^2\\
&\le 2\,\frac{\omega_N+2^N}{|E|}\,C_N\,(1-s)\,[\widetilde{u}]_{W^{s,2}(B_{\sqrt{2}})}^2,
\end{split}
\]
where we used that $Q_2\subset B_{\sqrt{2}}$. It is now sufficient to apply Lemma \ref{lm:kelvin} with $R=\sqrt{2}$, to get the claimed conclusion. 
\end{proof}

\section{An expedient Poincar\'e inequality}
\label{sec:3}

The following result is a nonlocal counterpart of \cite[Lemma 1]{Ha} in Hayman's paper. In the proof we pay due attention to the dependence of the constant on the fractional parameter $s$, as always.

\begin{prop}[Poincar\'e for boundary disks]
\label{teor poicare per palle centrate sul bordo}
	Let $1/2<s<1$ and let $\Omega\subset\mathbb{R}^2$ be an open simply connected set, with $\partial\Omega\not=\emptyset$.
	There exists a universal constant $\mathcal{T}_s>0$ such that for every $r>0$ and every $x_0\in\partial\Omega$, we have 
	\[
	\frac{\mathcal{T}_s}{r^{2\,s}}\,\int_{B_r(x_0)}|u(x)|^2\,dx\le \iint_{B_r(x_0)\times\mathbb{R}^2}\frac{|u(x)-u(y)|^2}{|x-y|^{2+2\,s}}\,dx\,dy,\qquad \mbox{ for every } 
	u\in C^\infty_0(\Omega).
	\]
	Moreover, $\mathcal{T}_s$ has the following asymptotic behaviours
	\[
	\mathcal{T}_s\sim \left(s-\frac{1}{2}\right),\quad \mbox{ for } s\searrow \frac{1}{2}\qquad	and\qquad
	\mathcal{T}_s\sim \frac{1}{1-s},\quad \mbox{ for } s\nearrow 1.
	\]
\end{prop}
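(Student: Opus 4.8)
The plan is to mimic the structure of Hayman's proof, replacing the local inequality \eqref{tangenziale} and its consequence \eqref{1dintro} by a genuinely nonlocal substitute. First I would reduce to the normalized case $r=1$ and $x_0=0$ by scaling: both sides scale correctly since the double integral over $B_r(x_0)\times\mathbb{R}^2$ of $|u(x)-u(y)|^2/|x-y|^{2+2s}$ scales like a length to the power $2-2s$ relative to the $L^2$ norm over $B_r(x_0)$, so the constant $\mathcal{T}_s$ is scale-free. After this reduction, the key geometric observation (as in Hayman) is that for every $\varrho\in(0,1)$, the circle $\partial B_\varrho$ must meet $\mathbb{R}^2\setminus\Omega$ when $\Omega$ is simply connected and $0\in\partial\Omega$; hence for $u\in C^\infty_0(\Omega)$, the periodic function $\theta\mapsto u(\varrho,\theta)$ vanishes at some point of $[0,2\pi]$.

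The core of the argument is then to pass from "$u$ vanishes somewhere on each circle $\partial B_\varrho$" to an $L^2$ bound. Here I would use the one-dimensional fractional Poincaré inequality for periodic functions vanishing at one point announced as Proposition~\ref{prop poincare angolare} in the appendix: for fixed $\varrho$, applied to $\theta\mapsto u(\varrho,\theta)$, it gives
\[
\int_0^{2\pi}|u(\varrho,\theta)|^2\,d\theta \le C_s \iint_{[0,2\pi]^2}\frac{|u(\varrho,\theta)-u(\varrho,\theta')|^2}{\mathrm{d}_{\mathbb{T}}(\theta,\theta')^{1+2s}}\,d\theta\,d\theta',
\]
where $\mathrm{d}_{\mathbb{T}}$ is the distance on the circle and $C_s$ is the relevant one-dimensional constant, which (by the appendix) behaves like $(s-1/2)^{-1}$ as $s\searrow 1/2$ and like $(1-s)^{-1}$ as $s\nearrow 1$. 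Now I multiply by $\varrho\,d\varrho$ and integrate over $\varrho\in(0,1)$: the left side becomes $\int_{B_1}|u|^2\,dx$. The right side is an integral over pairs of points lying on the same circle; I need to bound it by the full Gagliardo double integral $\iint_{B_1\times\mathbb{R}^2}$. The point is that for two points $x=(\varrho,\theta)$, $y=(\varrho,\theta')$ on the same circle of radius $\varrho\le 1$, the Euclidean distance $|x-y| = 2\varrho\sin(\mathrm{d}_{\mathbb{T}}(\theta,\theta')/2)$ is comparable to $\varrho\,\mathrm{d}_{\mathbb{T}}(\theta,\theta')$, so $\mathrm{d}_{\mathbb{T}}(\theta,\theta')^{-(1+2s)}$ is comparable to $\varrho^{1+2s}|x-y|^{-(1+2s)}$; after integrating in $\varrho$ the "missing" power of $|x-y|$ is recovered and one is led to compare with a Gagliardo-type seminorm with exponent $2+2s$ in the denominator, i.e. exactly the right object, but restricted to pairs on concentric circles rather than all pairs in $B_1$. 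To turn the single integral over $\varrho$ into a genuine double integral one uses an averaging/Fubini trick: rather than using $u(\varrho,\cdot)$ on one fixed circle, average the one-dimensional inequality against a second radial variable, or equivalently work with the quantity $\iint_{B_1\times B_1, |x|=|y|}$ and dominate it by $\iint_{B_1\times\mathbb{R}^2}$ using that $B_1\subset\mathbb{R}^2$ and a change of variables. This is the technical heart and I expect it to be the main obstacle: keeping the $s$-dependence of every constant explicit while performing these comparisons, and in particular ensuring no extra factor blowing up as $s\searrow 1/2$ or $s\nearrow 1$ is introduced beyond the $C_s$ coming from the appendix.

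An alternative, cleaner route that I would pursue in parallel: combine Proposition~\ref{prop mazya shaposnikova} with the one-dimensional ingredient. Proposition~\ref{prop mazya shaposnikova} controls $\|u-\overline{u}_E\|_{L^2(B_1)}^2$ by $(1-s)R^{2s}|E|^{-1}[u]_{W^{s,2}(B_1)}^2$; taking $E=B_1\setminus\Omega\cap B_1$ — which has measure bounded below by a universal constant since $0\in\partial\Omega$ and $\Omega$ is simply connected (a whole "sector-like" chunk of $B_1$ lies outside $\Omega$, or at least one uses that the complement meets every circle) — one gets that $u$ is close in $L^2$ to its average over a set where it vanishes, hence $\overline{u}_E$ is small; this already yields $\int_{B_1}|u|^2 \lesssim (1-s)[u]_{W^{s,2}(B_1)}^2 \le (1-s)\iint_{B_1\times\mathbb{R}^2}\cdots$, which gives the correct $s\nearrow 1$ asymptotics but the \emph{wrong} behaviour as $s\searrow 1/2$ (it does not degenerate, whereas the statement forces $\mathcal{T}_s\to 0$). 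So this shortcut cannot be the whole story: the genuine degeneration at $s=1/2$ must come from the circular/one-dimensional mechanism, which is consistent with lines having zero $s$-capacity precisely for $s\le 1/2$. I would therefore retain the circle-foliation argument of the first approach for the sharp $s\searrow 1/2$ asymptotics, and use the Maz'ya--Shaposnikova/extension machinery (Lemma~\ref{lm:kelvin}, Proposition~\ref{prop mazya shaposnikova}) only to handle the "far" part of the double integral (pairs $x\in B_1$, $y\notin B_1$, or pairs on non-concentric circles) cleanly. Finally, I would record the asymptotics of $\mathcal{T}_s$ by tracking that the only $s$-dependent factors entering are $C_s \sim (s-1/2)^{-1}, (1-s)^{-1}$ from Proposition~\ref{prop poincare angolare} (inverted, since it sits on the coercive side) together with the harmless $(1-s)$ and $R^{2s}$ from Proposition~\ref{prop mazya shaposnikova}, yielding $\mathcal{T}_s\sim (s-1/2)$ as $s\searrow 1/2$ and $\mathcal{T}_s\sim (1-s)^{-1}$ as $s\nearrow 1$, as claimed.
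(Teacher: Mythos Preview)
Your overall architecture matches the paper's: scale to $r=1$, $x_0=0$; use that every circle $\partial B_\varrho$ meets $\mathbb{R}^2\setminus\Omega$; apply Proposition~\ref{prop poincare angolare} on each circle; and lift to a two-dimensional Gagliardo integral while tracking $s$. Two points, however, need correction.

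First, your ``alternative route'' breaks down. Taking $E=B_1\cap(\mathbb{R}^2\setminus\Omega)$ in Proposition~\ref{prop mazya shaposnikova} is not permitted in general: for a simply connected $\Omega$ with $0\in\partial\Omega$, the set $B_1\setminus\Omega$ can be a single curve (think of $\Omega=\mathbb{R}^2\setminus\{(x,0):x\ge 0\}$), so $|E|=0$ and the proposition gives nothing. In the paper, Proposition~\ref{prop mazya shaposnikova} is used with $E=B_1\setminus B_{1/2}$, which always has positive universal measure; this reduces the problem to bounding $\|u\|_{L^2(B_1\setminus B_{1/2})}$ and is what lets one integrate over $\varrho\in(1/2,1)$ rather than $(0,1)$, keeping $\varrho$ bounded away from $0$.

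Second, the step you flag as ``the technical heart'' is indeed the main obstacle, and your sketch does not contain the idea that makes it work. After integrating the one-dimensional inequality in $\varrho$, the right-hand side is an integral over pairs $(x,y)$ with $|x|=|y|$, which is a codimension-one set in $B_1\times B_1$; one cannot simply dominate it by $\iint_{B_1\times\mathbb{R}^2}$ via inclusion or a change of variables, and the comparison $\mathrm{d}_{\mathbb{T}}(\theta,\theta')^{-(1+2s)}\asymp\varrho^{1+2s}|x-y|^{-(1+2s)}$ leaves the kernel one power of $|x-y|$ short. The paper manufactures the missing variable by writing
\[
(\varrho\,|\theta-\varphi|_{\mathbb{S}^1})^{-1-2s}=(1+2s)\int_0^{\infty}(t+\varrho\,|\theta-\varphi|_{\mathbb{S}^1})^{-2-2s}\,dt,
\]
then uses the bound $\varrho+t\ge 1/2$ (here the ring reduction is essential), a ``midpoint'' $\overline{\theta\varphi}$ of $\theta$ and $\varphi$ on $\mathbb{S}^1$, and the triangle inequality to show $t+\varrho|\theta-\varphi|_{\mathbb{S}^1}\ge |\varrho e^{i\theta}-(\varrho+t)e^{i\overline{\theta\varphi}}|$. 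After the changes of variable $\tau=\varrho+t$ and $\gamma=\overline{\theta\varphi}$, one lands exactly on $\iint_{B_1\times\mathbb{R}^2}|u(x)-u(y)|^2/|x-y|^{2+2s}\,dx\,dy$ written in polar coordinates. This device is the missing ingredient in your proposal; once it is in place, the asymptotics follow as you describe, with $\mathcal{T}_s^{-1}=\dfrac{20(1+2s)}{3\mu_s}+\dfrac{8}{3\pi}\mathcal{M}(1-s)$.
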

\begin{proof} Up to scaling and translating, we can assume without loss of generality that $r=1$ and that $x_0$ coincides with the origin.\par 
	We split the proof in three main steps: we first show that it is sufficient to prove the claimed estimate for the boundary ring $B_1\setminus B_{1/2}$. Then we prove such an estimate and at last we discuss the asymptotic behaviour of the constant obtained.
	\vskip.2cm\noindent
{\bf Step 1: reduction to a ring}.	Let $u\in C^\infty_0(\Omega)$, we then estimate the $L^2$ norm on $B_1$ as follows
\[
\begin{split}
\int_{B_1}|u(x)|^2\,dx&=\int_{B_1\setminus B_{1/2}}|u(x)|^2\,dx+\int_{B_{1/2}}|u(x)|^2\,dx\\
&\le \int_{B_1\setminus B_{1/2}}|u(x)|^2\,dx+2\,\int_{B_{1/2}}|u(x)-\overline{u}_{B_1\setminus B_{1/2}}|^2\,dx+2\,\int_{B_{1/2}}|\overline{u}_{B_1\setminus B_{1/2}}|^2\,dx\\
&\le \int_{B_1\setminus B_{1/2}}|u(x)|^2\,dx+2\,\int_{B_1}|u(x)-\overline{u}_{B_1\setminus B_{1/2}}|^2\,dx+2\,\int_{B_{1/2}}|\overline{u}_{B_1\setminus B_{1/2}}|^2\,dx\\
&\le \int_{B_1\setminus B_{1/2}}|u(x)|^2\,dx+2\,\int_{B_1}|u(x)-\overline{u}_{B_1\setminus B_{1/2}}|^2\,dx+\frac{2}{3}\,\int_{B_1\setminus B_{1/2}}|u(x)|^2\,dx,
\end{split}
\]
where we used the elementary inequality $(a+b)^2\le 2\,a^2+2\,b^2$ and Jensen's inequality. If we now apply Proposition \ref{prop mazya shaposnikova} with $R=1$ and $E=B_1\setminus B_{1/2}$, we get
\[
\int_{B_1}|u(x)|^2\,dx\le \frac{5}{3}\,\int_{B_1\setminus B_{1/2}}|u(x)|^2\,dx+\frac{8}{3\,\pi}\,\mathcal{M}\,(1-s)\,[u]^2_{W^{s,2}(B_1)}.
\]
Thus, in order to conclude, it is sufficient to prove that there exists a constant $C=C(s)>0$ such that 
\begin{equation}\label{claim step 1}
\int_{B_{1}\setminus B_{1/2}}|u(x)|^2\,dx\le C\,\iint_{B_1\times \mathbb{R}^2}\frac{|u(x)-u(y)|^2}{|x-y|^{2+2\,s}}\,dx\,dy,\qquad\mbox{ for every }
 u\in C^\infty_0(\Omega).
\end{equation}

	\vskip.2cm\noindent
{\bf Step 2: estimate on the ring}. We start with a topological observation.
Since we are assuming that $0\in \partial\Omega$ and that $\Omega$ is simply connected, we have the following crucial property	
\begin{equation}
\label{nullomotopo}
\partial B_\varrho\cap (\mathbb{R}^2\setminus \Omega)\not=\emptyset,\qquad \mbox{ for every }\varrho>0.
\end{equation}	
Indeed, if this were not true, we would have existence of a circle entirely contained in $\Omega$ and centered on the boundary of $\partial\Omega$. Such a circle could not be null-homotopic in $\Omega$, thus contradicting our topological assumption.
\par
In the rest of the proof, we will use polar coordinates $(\varrho,\theta)$ and we will make the slight abuse of notation of writing
$u(\varrho,\theta)$.
Then, in light of the property \eqref{nullomotopo}, for each $\varrho\in(1/2,1)$ there exists $\theta_\varrho\in[0,2\pi)$ such that $\theta\mapsto u(\varrho,\theta)$ must vanish at $\theta_\varrho$. Hence, for every $1/2<\varrho<1$ we can apply Proposition \ref{prop poincare angolare} to the function $\theta\mapsto u(\varrho,\theta)$ and get
\[
\int_0^{2\,\pi}|u(\varrho,\theta)|^2\,d\theta\le  \frac{1}{\mu_s}\,\int_0^{2\pi}\int_0^{2\pi}\frac{|u(\varrho,\theta)-u(\varrho,\varphi)|^2}{|\theta-\varphi|^{1+2\,s}_{\mathbb{S}^1}}\,d\theta\,d\varphi.
\]
The constant $\mu_s$ is the same as in Proposition \ref{prop poincare angolare} and 
\[
|\theta-\varphi|_{\mathbb{S}^1}:=\min_{k\in\mathbb{Z}} |\theta-\varphi+2\,k\,\pi|,\qquad \mbox{ for every } \theta,\varphi\in\mathbb{R}.
\]
If we now multiply both sides by $\varrho$, integrate over the interval $(1/2,1)$ and write the $L^2$ norm in polar coordinates, we get
	\begin{equation}\label{stima L2(B)}
	\begin{split}
\int_{B_1\setminus B_{1/2}}|u(x)|^2\,dx
	&\le \frac{1}{\mu_s}\,\int_\frac{1}{2}^1\int_0^{2\pi}\int_0^{2\,\pi}\frac{|u(\varrho,\theta)-u(\varrho,\varphi)|^2}{\varrho^{1+2\,s}\,{|\theta-\varphi|^{1+2\,s}_{\mathbb{S}^1}}}\,\varrho\,d\varrho\,d\theta\,d\varphi.
	\end{split}
	\end{equation}
	Observe that we further used the fact that $\varrho\le 1$, to let the term $\varrho^{-1-2\,s}$ appear. \par 
In order to achieve \eqref{claim step 1}, we need to show that the term on the right-hand side can be estimated by a two-dimensional Gagliardo-Slobodecki\u{\i} seminorm. To this aim, we follow an argument similar to that of \cite[Lemma B.2]{BB}.
At first, it is easily seen that
	\[
	\begin{split}
	\left(\varrho\,|\theta-\varphi|_{\mathbb{S}^1}\right)^{-1-2\,s}&=(1+2\,s)\,\int_0^{+\infty}\left(t+\varrho\,|\theta-\varphi|_{\mathbb{S}^1}\right)^{-2-2\,s}\,dt.
	\end{split}
	\]
	By inserting this in \eqref{stima L2(B)}, we end up with
	\begin{equation}\label{stima 2 L2(B)}
	\begin{split}
\int_{B_1\setminus B_{1/2}} &|u(x)|^2\,dx\\&\le \frac{1+2\,s}{\mu_s}\,\int_0^{2\pi}\int_0^{2\pi}\int_\frac{1}{2}^1\int_0^{+\infty}\frac{|u(\varrho,\theta)-u(\varrho,\varphi)|^2}{(t+\varrho\,|\theta-\varphi|_{\mathbb{S}^1})^{2+2\,s}}\,\varrho\,d\theta\,d\varphi\,d\varrho\,dt\\
	&\le \frac{2\,(1+2\,s)}{\mu_s}\,\int_0^{2\pi}\int_0^{2\,\pi}\int_\frac{1}{2}^1\int_0^{+\infty}\frac{|u(\varrho,\theta)-u(\varrho,\varphi)|^2}{(t+\varrho\,|\theta-\varphi|_{\mathbb{S}^1})^{2+2\,s}}\,\varrho\,(\varrho+t)\,d\theta\,d\varphi\,d\varrho\,dt,
	\end{split}
	\end{equation}
	where we have used that $1/2\le\varrho+t$. We now split the set $[0,2\,\pi]\times [0,2\,\pi]=J_{-1}\cup J_0\cup J_1$,
	where
	\[
	J_{-1}=\Big\{(\theta,\varphi)\,:\,\theta\in[0,\pi], \,\theta+\pi<\varphi\le 2\,\pi\Big\},\qquad J_{1}=\Big\{(\theta,\varphi)\,:\,\theta\in [\pi,2\,\pi],\,0\le\varphi<\theta-\pi\Big\},\]
	and
	\[
	J_0=\Big\{(\theta,\varphi)\,:\,\theta\in[0,2\,\pi],\,
	\max\{0,\theta-\pi\}\le\varphi\le\min\{2\,\pi,\theta+\pi\}\Big\},
	\]
see Figure \ref{fig:1}.	
\begin{figure}
\includegraphics[scale=.25]{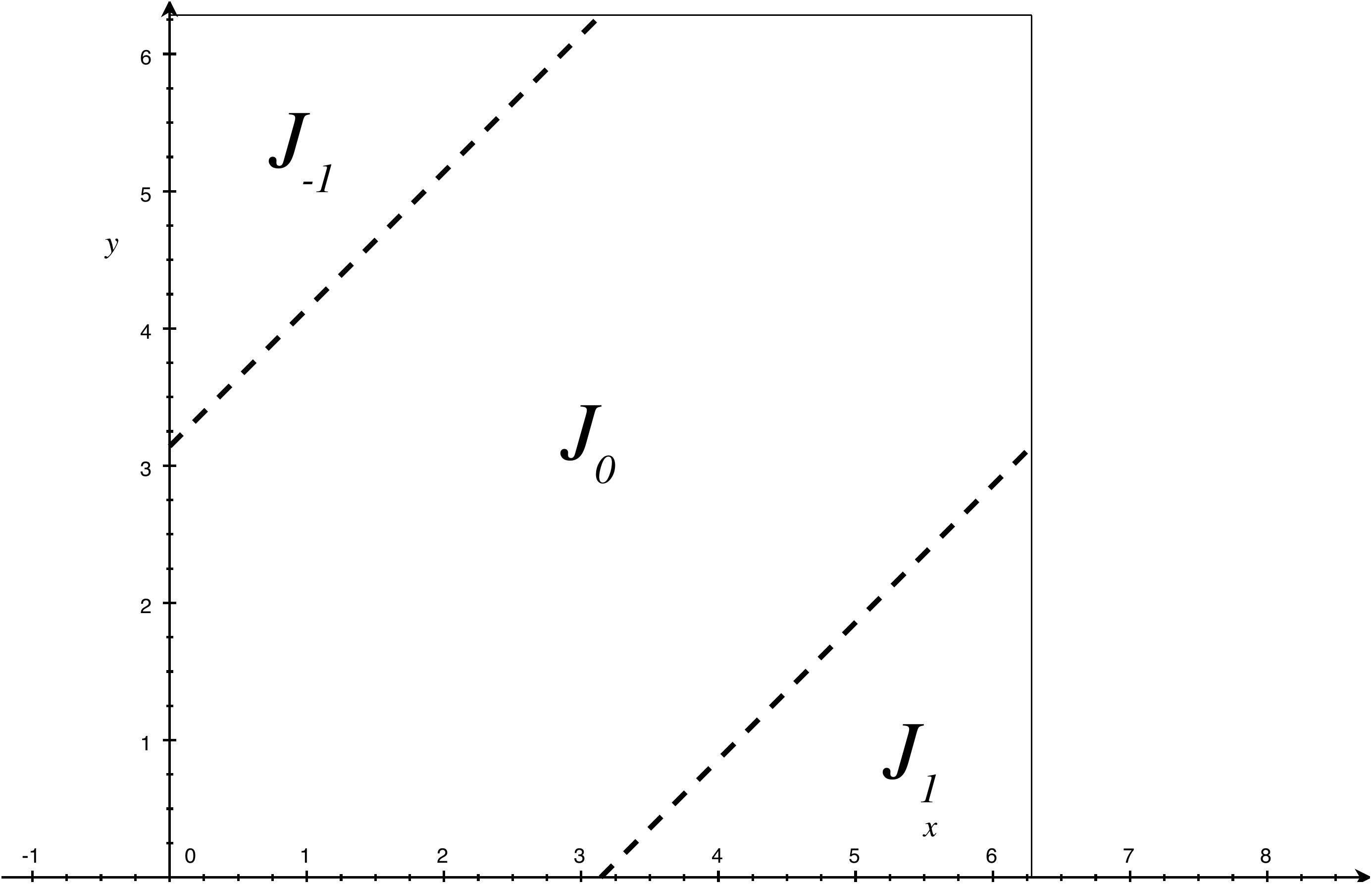}
\caption{The partition of $[0,2\,\pi]\times[0,2\,\pi]$ needed to define the midpoint function.}
\label{fig:1}
\end{figure}
	Then, we define the {\it midpoint function} by
\begin{equation}
\label{midpoint}
	\overline{\theta\,\varphi}=\frac{\theta+\varphi}{2}+\ell\,\pi,\qquad \mbox{ if } (\theta,\varphi)\in J_\ell, \mbox{ with } \ell=-1,0,1.
\end{equation}
	Thanks to the triangle inequality, we estimate the numerator in the right-hand side of \eqref{stima 2 L2(B)} as follows
	\[\begin{split}
	|u(\varrho,\theta)-u(\varrho,\varphi)|^2&\le2 \left|u\left(\varrho,\theta\right)-u\left(\varrho+t,\overline{\theta\,\varphi}\right)\right|^2+2\left|u\left(\varrho,\varphi\right)-u\left(\varrho+t,\overline{\theta\,\varphi}\right)\right|^2.
	\end{split}\]	
	As for the denominator, we observe that $|\theta-\overline{\theta\,\varphi}|_{\mathbb{S}^1}=|\varphi-\overline{\theta\,\varphi}|_{\mathbb{S}^1}$, thus we get
	\[
	|\theta-\varphi|_{\mathbb{S}^1}=2\,|\theta-\overline{\theta\,\varphi}|_{\mathbb{S}^1}\ge 2\,|e^{i\,\theta}-e^{i\,\overline{\theta\,\varphi}}|\quad \mbox{ and }\quad |\theta-\varphi|_{\mathbb{S}^1}=2\,|\varphi-\overline{\theta\,\varphi}|_{\mathbb{S}^1}\ge 2\,|e^{i\,\varphi}-e^{i\,\overline{\theta\,\varphi}}|,
	\]
	where the inequality comes from Lemma \ref{lm:equivalenza}. By using this fact, the identity $|e^{i\overline{\theta\,\varphi}}|=1$ and the triangle inequality again, we can estimate the denominator as
	\[
	\begin{split}
	t+\varrho\,|\theta-\varphi|_{\mathbb{S}^1}&\ge  t+\varrho\,|e^{i\theta}-e^{i\,\overline{\theta\,\varphi}}|\\
	&\ge \left|\varrho\,(e^{i\,\theta}-e^{i\,\overline{\theta\,\varphi}})-t\,e^{i\,\overline{\theta\,\varphi}}\right|=\left|\varrho\,e^{i\,\theta}-(\varrho+t)\,e^{i\,\overline{\theta\,\varphi}}\right|,
	\end{split}
	\]
	and similarly
	\[
	t+\varrho\,|\theta-\varphi|_{\mathbb{S}^1}\ge \left|\varrho\,e^{i\,\varphi}-(\varrho+t)\,e^{i\,\overline{\theta\,\varphi}}\right|.
	\]
	These allow us to estimate the right-hand side in \eqref{stima 2 L2(B)} in the following way:
	\[
	\begin{split}
\int_{B_1\setminus B_{1/2}} &|u(x)|^2\,dx\\
	&\le \frac{4\,(1+2\,s)}{\mu_s}\,\int_0^{2\,\pi}\int_0^{2\,\pi}\int_\frac{1}{2}^1\int_0^{+\infty}\frac{\left|u\left(\varrho,\theta\right)-u\left(\varrho+t,\overline{\theta\,\varphi}\right)\right|^2}{\left|\varrho\,e^{i\,\theta}-(\varrho+t)\,e^{i\,\overline{\theta\,\varphi}}\right|^{2+2\,s}}\,\varrho\,(\varrho+t)\,d\theta\,d\varphi\,d\varrho\,dt\\
	&+\frac{4\,(1+2\,s)}{\mu_s}\,\int_0^{2\,\pi}\int_0^{2\,\pi}\int_\frac{1}{2}^1\int_0^{+\infty}\frac{\left|u\left(\varrho,\varphi\right)-u\left(\varrho+t,\overline{\theta\,\varphi}\right)\right|^2}{\left|\varrho\,e^{i\,\varphi}-(\varrho+t)\,e^{i\,\overline{\theta\,\varphi}}\right|^{2+2\,s}}\,\varrho\,(\varrho+t)\,d\theta\,d\varphi\,d\varrho\,dt\\
	&=\frac{8\,(1+2\,s)}{\mu_s}\,\int_0^{2\,\pi}\int_0^{2\,\pi}\int_\frac{1}{2}^1\int_0^{+\infty}\frac{\left|u\left(\varrho,\theta\right)-u\left(\varrho+t,\overline{\theta\,\varphi}\right)\right|^2}{\left|\varrho\,e^{i\,\theta}-(\varrho+t)\,e^{i\,\overline{\theta\,\varphi}}\right|^{2+2\,s}}\,\varrho\,(\varrho+t)\,d\theta\,d\varphi\,d\varrho\,dt.
\end{split}
\]
In the last identity we used that both multiple integrals coincide, by symmetry of the integrands. If we now make the change of variable $\tau=\varrho+t$ 
 and use the decomposition $[0,2\,\pi]\times [0,2\,\pi]=J_{-1}\cup J_0\cup J_1$, we obtain
\begin{equation}
\label{sabato}
 \begin{split}
 \int_{B_1\setminus B_{1/2}} &|u(x)|^2\,dx\\
	&\le \frac{8\,(1+2\,s)}{\mu_s}\,\sum_{\ell=-1}^1 \iint_{J_\ell}\int_\frac{1}{2}^1\int_\varrho^{+\infty}\frac{\left|u\left(\varrho,\theta\right)-u(\tau,\overline{\theta\,\varphi})\right|^2}{\left|\varrho\,e^{i\,\theta}-\tau\,e^{i\,\overline{\theta\,\varphi}}\right|^{2+2\,s}}\,\varrho\,\tau\,d\theta\,d\varphi\,d\varrho\,d\tau.
 \end{split}
\end{equation}
If we now denote
	\[
	\widetilde{J}_{-1}=\Big\{(\theta,\varphi)\,:\,\theta\in[0,\pi],\, \theta-\pi<\varphi\le 0\Big\}\quad\mbox{and}\quad \widetilde{J}_1=\Big\{(\theta,\varphi)\,:\,\theta\in[\pi,2\,\pi],\,2\,\pi\le\varphi<\theta+\pi\Big\},
\]
use the definition of midpoint function \eqref{midpoint} and make the change of variables
\[
\begin{array}{ccc}
J_{-1} & \to &\widetilde J_{-1}\\
(\theta,\varphi) & \mapsto & (\theta,\varphi-2\,\pi)
\end{array}\qquad \mbox{ and }\qquad \begin{array}{ccc}
J_{1} & \to &\widetilde J_{1}\\
(\theta,\varphi) & \mapsto & (\theta,\varphi+2\,\pi),
\end{array}
\]
we obtain from \eqref{sabato}
\[
\begin{split}	
\int_{B_1\setminus B_{1/2}} |u(x)|^2\,dx\le \frac{8\,(1+2\,s)}{\mu_s}\,\iint_{\widetilde{J}_{-1}\cup J_0\cup \widetilde{J}_1}\int_\frac{1}{2}^1\int_\varrho^{+\infty}\frac{\left|u\left(\varrho,\theta\right)-u\left(\tau,\frac{\theta+\varphi}{2}\right)\right|^2}{\left|\varrho\,e^{i\,\theta}-\tau\,e^{i\,\frac{\theta+\varphi}{2}}\right|^{2+2s}}\,\varrho\,\tau\,d\theta\,d\varphi\,d\varrho\,d\tau.
	\end{split}
	\] 
For every $\theta\in[0,2\,\pi]$, we now make the change of variable $\gamma=(\theta+\varphi)/2$, thus the above estimate becomes 
\begin{equation}
\label{tosse}
\int_{B_1\setminus B_{1/2}} |u(x)|^2\,dx
	\le \frac{4\,(1+2\,s)}{\mu_s}\,\sum_{\ell=-1}^1\iint_{\widehat{J}_\ell}\int_\frac{1}{2}^1\int_\varrho^{+\infty}\frac{\left|u\left(\varrho,\theta\right)-u\left(\tau,\gamma\right)\right|^2}{\left|\varrho\,e^{i\,\theta}-\tau\,e^{i\,\gamma}\right|^{2+2s}}\,\varrho\,\tau\,d\theta\,d\gamma\,d\varrho\,d\tau,
\end{equation} 
where
	\[
	\widehat{J}_{-1}=\left\{(\theta,\gamma)\,:\theta\in\left[0,\frac{\pi}{2}\right],\,\theta-\frac{\pi}{2}<\gamma\le 0\right\},\quad \widehat{J}_1=\left\{(\theta,\gamma)\,:\,\theta\in\left[\frac{3}{2}\,\pi,2\,\pi\right],\,2\,\pi\le\gamma<\theta+\frac{\pi}{2}\right\},\]
	and
	\[\widehat{J}_0=\left\{(\theta,\gamma)\,:\,\theta\in[0,2\,\pi],\,
	\max\left\{0,\,\theta-\frac{\pi}{2}\right\}\le\gamma\le\min\left\{2\pi,\,\theta+\frac{\pi}{2}\right\}\right\} .
	\]
If we now exploit the $2\,\pi-$periodicity of the integrand, we have
\begin{equation}
\label{oh}
\begin{split}
\iint_{\widehat{J}_{-1}}&\int_\frac{1}{2}^1\int_\varrho^{+\infty}\frac{\left|u\left(\varrho,\theta\right)-u\left(\tau,\gamma\right)\right|^2}{\left|\varrho\,e^{i\,\theta}-\tau\,e^{i\,\gamma}\right|^{2+2s}}\,\varrho\,\tau\,d\theta\,d\gamma\,d\varrho\,d\tau\\
&=\iint_{\widehat{J}_{-1}}\int_\frac{1}{2}^1\int_\varrho^{+\infty}\frac{\left|u\left(\varrho,\theta\right)-u\left(\tau,\gamma+2\,\pi\right)\right|^2}{\left|\varrho\,e^{i\,\theta}-\tau\,e^{i\,(\gamma+2\,\pi)}\right|^{2+2s}}\,\varrho\,\tau\,d\theta\,d\gamma\,d\varrho\,d\tau\\
&=\iint_{I_{-1}}\int_\frac{1}{2}^1\int_\varrho^{+\infty}\frac{\left|u\left(\varrho,\theta\right)-u\left(\tau,\varphi\right)\right|^2}{\left|\varrho\,e^{i\theta}-\tau\,e^{i\,\varphi}\right|^{2+2s}}\,\varrho\,\tau\,d\theta\,d\varphi\,d\varrho\,d\tau,
\end{split}
\end{equation}
where we set $\varphi=\gamma+2\,\pi$ and
\[
I_{-1}=\left\{(\theta,\varphi)\,:\theta\in\left[0,\frac{\pi}{2}\right],\,\theta+\frac{3}{2}\,\pi<\varphi\le 2\,\pi\right\}.
\]
Similarly, we can obtain 
\begin{equation}
\label{oh2}
\begin{split}
\iint_{\widehat{J}_{1}}&\int_\frac{1}{2}^1\int_\varrho^{+\infty}\frac{\left|u\left(\varrho,\theta\right)-u\left(\tau,\gamma\right)\right|^2}{\left|\varrho\,e^{i\,\theta}-\tau\,e^{i\,\gamma}\right|^{2+2\,s}}\,\varrho\,\tau\,d\theta\,d\gamma\,d\varrho\,d\tau\\
&=\iint_{I_1}\int_\frac{1}{2}^1\int_\varrho^{+\infty}\frac{\left|u\left(\varrho,\theta\right)-u\left(\tau,\varphi\right)\right|^2}{\left|\varrho\,e^{i\,\theta}-\tau\,e^{i\,\varphi}\right|^{2+2s}}\,\varrho\,\tau\,d\theta\,d\varphi\,d\varrho\,d\tau,
\end{split}
\end{equation}
with the change of variable $\varphi=\gamma-2\,\pi$ and 
\[
I_1=\left\{(\theta,\varphi)\,:\,\theta\in\left[\frac{3}{2}\,\pi,2\,\pi\right],\,0\le\varphi<\theta-\frac{3}{2}\,\pi\right\}.
\]
By observing that $I_{-1}\cup \widehat{J}_0\cup I_1\subset [0,2\,\pi]\times [0,2\,\pi]$ and that the three sets $I_{-1},\widehat{J}_0$ and $I_1$ are pairwise disjoint, from \eqref{tosse}, \eqref{oh} and \eqref{oh2} we finally obtain 
\[
\begin{split}
\int_{B_1\setminus B_{1/2}} |u(x)|^2\,dx
	&\le \frac{4\,(1+2\,s)}{\mu_s}\,\iint_{[0,2\,\pi]\times[0,2\,\pi]}\int_\frac{1}{2}^1\int_\varrho^{+\infty}\frac{\left|u\left(\varrho,\theta\right)-u\left(\tau,\varphi\right)\right|^2}{\left|\varrho\,e^{i\,\theta}-\tau\,e^{i\,\varphi}\right|^{2+2\,s}}\,\varrho\,\tau\,d\theta\,d\varphi\,d\varrho\,d\tau\\
	&\le \frac{4\,(1+2\,s)}{\mu_s}\,\iint_{B_1\times \mathbb{R}^2} \frac{|u(x)-u(y)|^2}{|x-y|^{2+2\,s}}\,dx\,dy.
	\end{split}
\]
This concludes the proof of \eqref{claim step 1}.
\vskip.2cm\noindent	
{\bf Step 3: asymptotics for the constant.} From {\bf Step 1} and {\bf Step 2}, we obtained the Poincar\'e inequality claimed in the statement, with constant
\[
\mathcal{T}_s=\left(\frac{20\,(1+2\,s)}{3\,\mu_s}+\frac{8}{3\,\pi}\,\mathcal{M}\,(1-s)\right)^{-1}.
\]
By using the asymptotics for the constant $\mu_s$ (see Proposition \ref{prop poincare angolare} below), we get the desired conclusion.
\end{proof} 
\begin{oss}
The previous result can not hold for $0<s\le 1/2$. Indeed, if the result were true for $0<s\le 1/2$, this would permit to extend the fractional Makai-Hayman inequality to this range, as well (see the next section). However, this would contradict Theorem \ref{teo:teor valori ammissibili}.
\end{oss}

\section{Proof of Theorem \ref{teo:teorema principale}}
\label{sec:4}
	Without loss of generality, we can consider $r_\Omega=1$. We take $\mathfrak{B}$ and $\mathfrak{B}_1,\ldots,\mathfrak{B}_{36}$ to be respectively the covering of $\Omega$ and the subclasses given by Lemma \ref{hayman2}, made of ball with radius $r=1+\sqrt{2}$. 
	\par 
	We take an index $k\in\{1,\ldots,36\}$, then we know that $\mathfrak{B}_k$ is composed of (possibly) countably many disjoint balls with radius $r$, centered on $\partial\Omega$. We indicate by $B^{j,k}$ each of these balls.
	\par 
Then, for every $u\in C^\infty_0(\Omega)\setminus\{0\}$ we have
	\begin{equation}\label{somma}
	\iint_{\mathbb{R}^2\times\mathbb{R}^2}\frac{|u(x)-u(y)|^2}{|x-y|^{2+2\,s}}\,dx\,dy\ge\sum_{B^{j,k}\in\mathfrak{B}_k}\iint_{B^{j,k}\times \mathbb{R}^2}\frac{|u(x)-u(y)|^2}{|x-y|^{2+2\,s}}\,dx\,dy.
	\end{equation}
For each ball $B^{j,k}$, we can apply Proposition \ref{teor poicare per palle centrate sul bordo} so to obtain that
	\[
\sum_{B^{j,k}\in\mathfrak{B}_k}\iint_{B^{j,k}\times \mathbb{R}^2}\frac{|u(x)-u(y)|^2}{|x-y|^{2+2\,s}}\,dx\,dy\ge \frac{\mathcal{T}_s}{(1+\sqrt{2})^{2\,s}}\,\sum_{B^{j,k}\in\mathfrak{B}_k}\int_{B^{j,k}}|u(x)|^2\,dx.
	\]
	We insert this estimate in \eqref{somma} and then sum over $k=1,\dots,36$. We get
	\[
	\begin{split}
36\,\iint_{\mathbb{R}^2\times\mathbb{R}^2}\frac{|u(x)-u(y)|^2}{|x-y|^{2+2\,s}}\,dx\,dy&\ge \sum_{k=1}^{36}\sum_{B^{j,k}\in\mathfrak{B}_k}\iint_{B^{j,k}\times\mathbb{R}^2}\frac{|u(x)-u(y)|^2}{|x-y|^{2+2\,s}}\,dx\,dy\\
&\ge \frac{\mathcal{T}_s}{(1+\sqrt{2})^{2\,s}}\,\sum_{k=1}^{36}\sum_{B^{j,k}\in\mathfrak{B}_k}\int_{B^{j,k}}|u(x)|^2\,dx\\
&\ge \frac{\mathcal{T}_s}{(1+\sqrt{2})^{2\,s}}	\int_\Omega|u(x)|^2\,dx.
	\end{split}
	\]
In the last inequality we used that $\mathfrak{B}$ is a covering of $\Omega$.	
By recalling the definition of $\lambda_1^s(\Omega)$, from the previous chain of inequalities we thus get the claimed estimate \eqref{MH}, with constant
\[
\mathcal{C}_s:=\frac{\mathcal{T}_s}{36\,(1+\sqrt{2})^{2\,s}}.
\]
The asymptotic behaviour of $\mathcal{C}_s$ can now be inferred from that of $\mathcal{T}_s$, which in turn is contained in Proposition \ref{teor poicare per palle centrate sul bordo}.

\begin{oss}
For suitable classes of open sets in $\mathbb{R}^N$ and every $0<s<1$, it is possible to give a Makai-Hayman--type lower bound on $\lambda_1^s$, by taking advantage of the nonlocality of the Gagliardo-Slobodecki\u{\i} seminorm. More precisely, this is possible provided $\Omega$ satisfies the following mild regularity assumption: there exist\footnote{It is not difficult to see that this property never holds for $\sigma=1$.} $\sigma>1$ and $\alpha>0$ 
\begin{equation}
\label{francesca}
\frac{|B_{\sigma\, r_\Omega}(x)\setminus\Omega|}{|B_{\sigma\,r_\Omega}(x)|}\ge \alpha,\qquad \mbox{ for every } x\in\Omega.
\end{equation}
	Indeed, in this case for every $u\in C^\infty_0(\Omega)$ we can simply estimate 
	\[
	\begin{split}
	\iint_{\mathbb{R}^N\times\mathbb{R}^N}\frac{|u(x)-u(y)|^2}{|x-y|^{N+2\,s}}\,dx\,dy&\ge\int_{\mathbb{R}^N}\left(\int_{B_{\sigma\, r_\Omega}(x)\setminus \Omega}\frac{|u(x)|^2}{|x-y|^{N+2\,s}}\,dy\right)\,dx\\
	&\ge\frac{1}{(\sigma\, r_\Omega)^{N+2\,s}}\,\int_{\mathbb{R}^N}|B_{\sigma\, r_\Omega}(x)\setminus\Omega|\,|u(x)|^2\,dx\\
	&\ge\frac{\alpha\,\omega_N}{{(\sigma\, r_\Omega)}^{2\,s}}\,\int_{\Omega}|u(x)|^2\,dx,
	\end{split}
	\]
where in the last inequality we used the additional condition. By arbitrariness of $u$, we get
\[
\lambda_1^s(\Omega)\ge 	\frac{\alpha\,\omega_N}{\sigma^{2\,s}}\,\frac{1}{r_\Omega^{2\,s}}.
\]
One could observe that the additional condition \eqref{francesca} does not always hold for a simply connected set in the plane. Moreover, the constant obtained in this way is quite poor: first of all, it is not universal. It depends on the parameters $\alpha$ and $\sigma$ and it deteriorates as $\sigma\searrow 1$, since in this case we must have $\alpha\searrow 0$. Secondly, it does not exhibits the correct asymptotic behaviour as $s$ goes to $1$.
\end{oss}

\section{Proof of Theorem \ref{teo:teor valori ammissibili}}
\label{sec:5}

Let $s\in(0,1/2]$ and $\{Q_k\}_{k\in\mathbb{N}}\subset\mathbb{R}^2$ be the sequence of open squares $Q_k=(-k,k)^2$, with $k\in\mathbb{N}\setminus\{0,1\}$. We introduce the one-dimensional set 
$$
{\Sigma}=\bigcup\limits_{i\in\mathbb{Z}}{\Sigma}^{(i)},\qquad \mbox{ where }\, \Sigma^{(i)}:=\bigcup\limits_{i\in\mathbb{Z}}\left\{(x,i)\in\mathbb{R}^2\,:\,|x|\ge1\right\},
$$ 
and then define, for every fixed $k\in\mathbb{N}\setminus\{0,1\}$, the ``cracked'' square $\widetilde{Q}_k=Q_k\setminus{\Sigma}$ (see Figure \ref{fig:cracker}).\par 
	First of all, we observe that 
	\[
	r_{\widetilde{Q}_k}=\frac{\sqrt{5}}{2},\qquad\mbox{ for every } k\ge 2.
	\]
Thus, if we can show that 
	\begin{equation}
	\label{l1s va a zero}
		\lim_{k\to \infty}\lambda_1^s(\widetilde{Q}_k)=0,
	\end{equation}
	we would automatically get the desired counter-example. We will obtain \eqref{l1s va a zero} by proving that
	\begin{equation}
	\label{denti}
	\lambda_1^s(\widetilde{Q}_k)=\lambda_1^s(Q_k),\qquad \mbox{ for every } k\ge 2.
	\end{equation}
Indeed, if this were true, we would have 
	\[
	\lim_{k\to\infty}\lambda_1^s(\widetilde{Q}_k)=\lim_{k\to\infty}\lambda_1^s(Q_k)=\lim_{k\to\infty} k^{-2\,s}\,\lambda^s_1(Q_1)=0,
	\]
	by the scale properties of $\lambda^1_s$. This would prove \eqref{l1s va a zero}, as claimed.
\vskip.2cm\noindent
We are thus left with proving \eqref{denti}.
We already know that
\[
\lambda_1^s(\widetilde{Q}_k)\ge\lambda_1^s(Q_k),
\]
thanks to the fact that $\lambda_1^s$ is monotone with respect to set inclusion.	
	In the remaining part of the proof, we focus our attention in proving the opposite inequality.
	\vskip.2cm\noindent
	\begin{figure}
	\includegraphics[scale=.3]{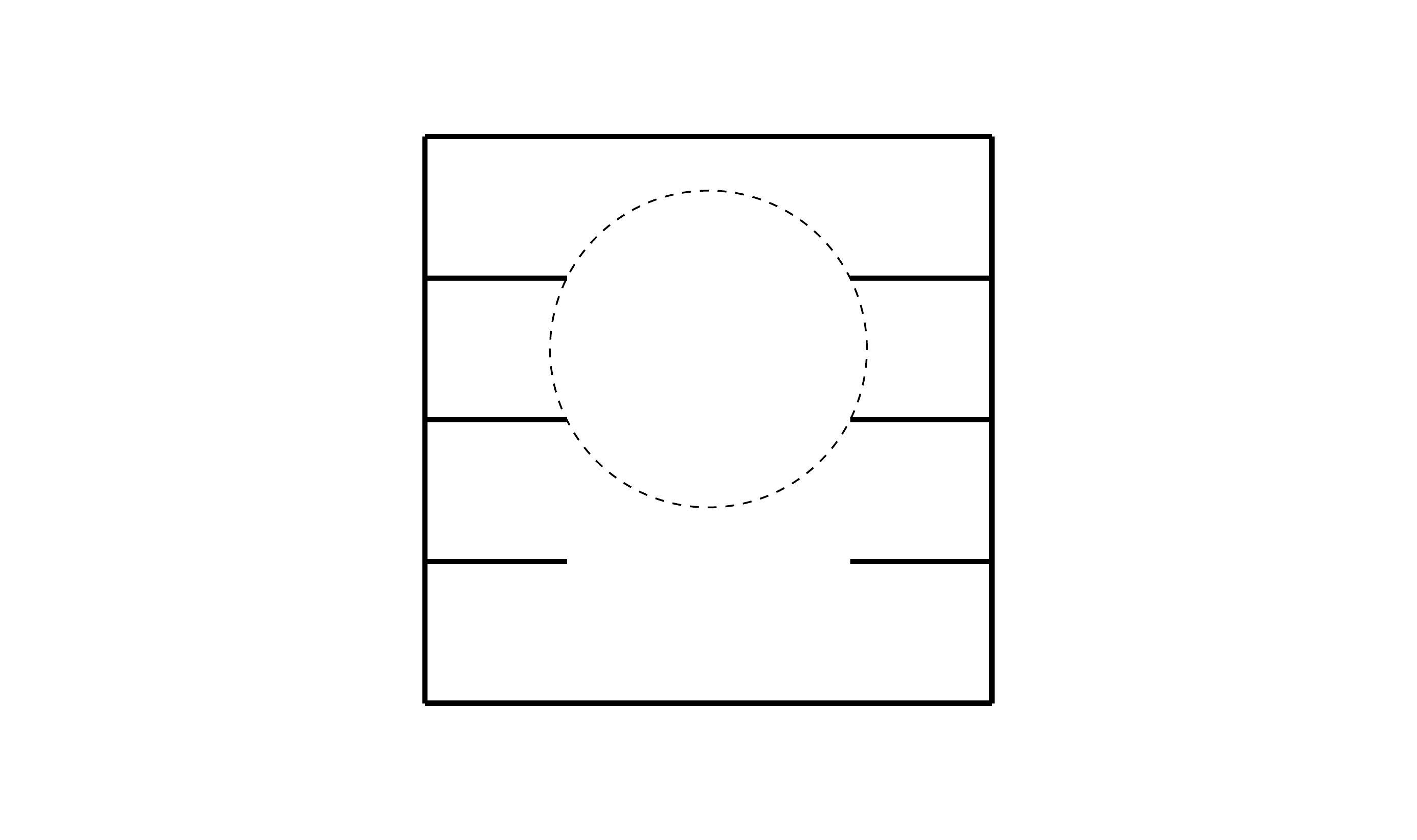}
	\caption{The set $\widetilde{Q}_k$ with $k=2$. In dashed line, a disk of maximal radius.}
	\label{fig:cracker}
	\end{figure}
At this aim, for every $n\in\mathbb{N}\setminus\{0\}$ we introduce the neighborhoods 
\[
\Sigma_{k,n}^{(i)}=\left\{x\in\mathbb{R}^2\, :\, \mathrm{dist}(x,\Sigma^{(i)}\cap Q_k)\le \frac{1}{n}\right\},	\qquad \mbox{ for } i\in\{-(k-1),\dots,k-1\},
\]
and consider a sequence of cut-off functions $\{\varphi_n^{(i)}\}_{n\in\mathbb{N}\setminus\{0\}}\subset C^\infty_0(\Sigma^{(i)}_{k,2\,n})$ such that
\[
0\le \varphi^{(i)}_n\le 1,\qquad \varphi^{(i)}_n\equiv 1 \mbox{ on } \Sigma^{(i)}_{k,4\,n},\qquad |\nabla \varphi^{(i)}_n(x)|\le C\,n,
\]	
for some constant $C>0$, independent of $n$. Observe that by construction we have
\[
\mathrm{spt}(\varphi^{(i)}_n)\cap \mathrm{spt}(\varphi^{(j)}_n)=\emptyset,\qquad \mbox{ for } i\not=j,
\]
\begin{figure}
	\includegraphics[scale=.3]{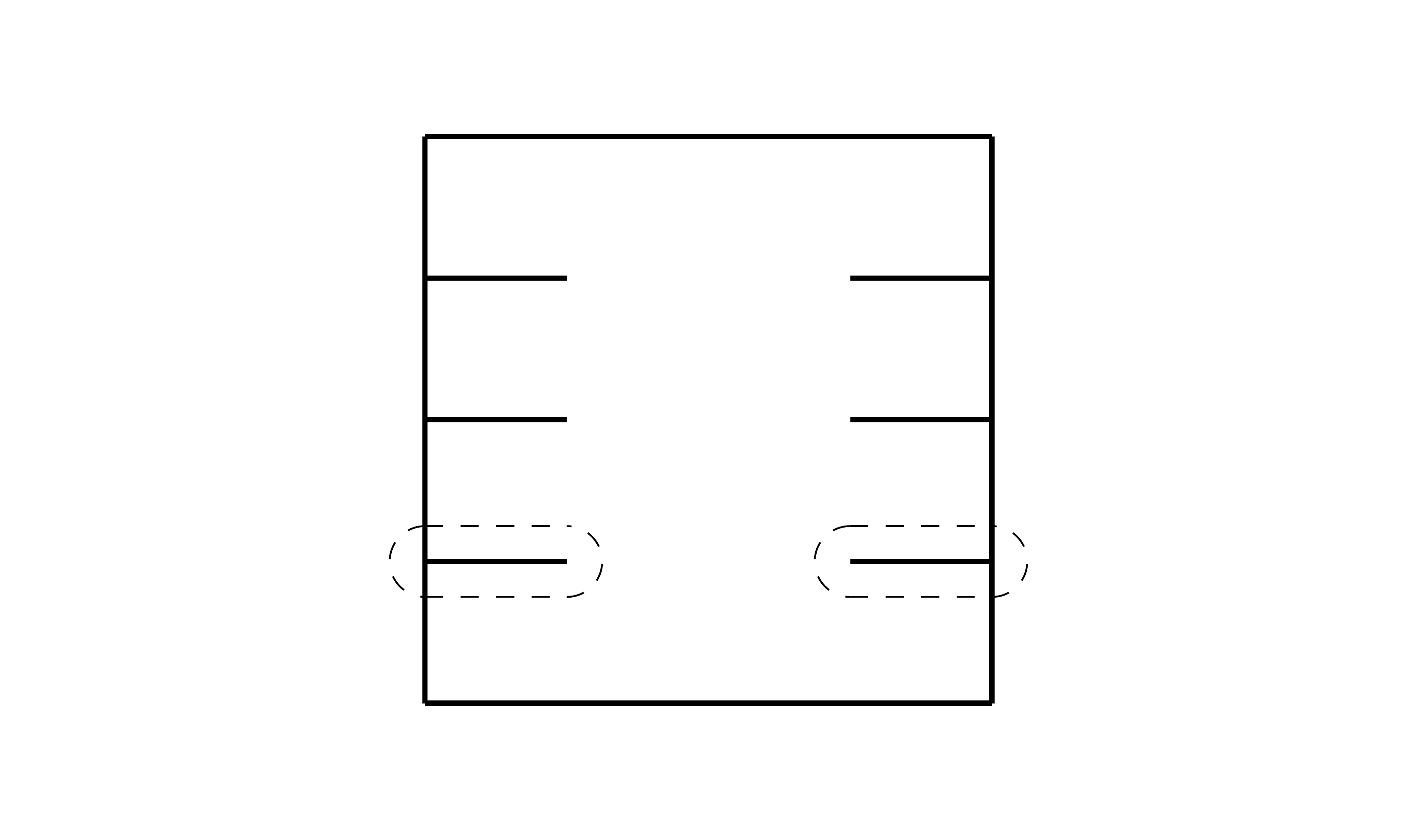}
	\caption{The dashed line encloses one of the set $\Sigma_{k,n}^{(i)}$.}
	\label{fig:cracker2}
	\end{figure}	
By using an interpolation inequality (see \cite[Corollary 2.2]{BPS}) and the properties of the cut-off functions, we can estimate the energy of each $\varphi_n^{(i)}$ as follows 
	\[
	\begin{split}
		[\varphi^{(i)}_n]^2_{W^{s,2}(\mathbb{R}^2)}&\le C\,\left(\int_{\Sigma_{k,2\,n}^{(i)}}|\varphi^{(i)}_n|^2\,dx\right)^{1-s}\,\left(\int_{\Sigma_{k,2\,n}^{(i)}}|\nabla\varphi^{(i)}_n|^2\,dx\right)^s\\
		&\le C\,|\Sigma_{k,2\,n}^{(i)}|^{1-s}\,|\Sigma_{k,2\,n}^{(i)}|^s\,n^{2\,s}\le C\,n^{2\,s-1},
	\end{split}
    \]
 for a constant $C>0$ independent\footnote{Observe that such a constant depend on $k$, through the length of the set $\Sigma^{(i)}\cap Q_k$. However this is not a problem, since in this part $k$ is now fixed.} of $n$. 
In particular, for every $i\in\{-(k-1),\dots,k-1\}$ we have
\begin{equation}
\label{stima1}
	\lim_{n\to+\infty}[\varphi^{(i)}_n]^2_{W^{s,2}(\mathbb{R}^2)}=0,\qquad\mbox{ if }0<s<\frac{1}{2},
\end{equation}
while
\begin{equation}\label{stima2}
	\sup_{n\ge 1}\,[\varphi^{(i)}_n]^2_{W^{s,2}(\mathbb{R}^2)}\le C,\qquad\mbox{ if }s=\frac{1}{2}.
\end{equation}
From now on, for ease of notation, we denote \[
	\Phi_{k,n}=\displaystyle\sum\limits_{i=-(k-1)}^{k-1}\varphi^{(i)}_n\in C^\infty_0(\mathbb{R}^2).
\]
Due to the different behaviours \eqref{stima1} and \eqref{stima2}, we need to consider the cases $0<s<1/2$ and $s=1/2$ separately.\par 
	\vskip.2cm\noindent
	{\bf Case $0<s<1/2$.}
For every $u\in C^\infty_0(Q_k)\setminus\{0\}$, we simply take 
\[
u_{n}=\left(1-\Phi_{k,n}\right)\,u,
\] 
and observe that $u_{n}\in C^\infty_0(\widetilde{Q}_k)$ for every $n\in\mathbb{N}\setminus\{0\}$. 
	Since each $u_n$ is admissible for the problem \eqref{def l1s}, we get
	\begin{equation}
	\label{stimetta}
	\sqrt{\lambda_1^s(\widetilde{Q}_k)}\le\frac{[u_{n}]_{W^{s,2}(\mathbb{R}^2)}}{\|u_n\|_{L^2(\widetilde{Q}_k)}}\le\frac{[u]_{W^{s,2}(\mathbb{R}^2)}+\|u\|_{L^\infty(\mathbb{R}^2)}\,\left[1-\Phi_{k,n}\right]_{W^{s,2}(\mathbb{R}^2)}}{\|u\,(1-\Phi_{k,n})\|_{L^2(\widetilde{Q}_k)}},
	\end{equation}
	where in the last inequality we have used the Leibniz--type rule \eqref{leibniz} and the fact $|1-\Phi_{k,n}|\le 1$. We now observe that 
	\[
	\lim_{n\to\infty} \|u\,(1-\Phi_{k,n})\|_{L^2(\widetilde{Q}_k)}=\|u\|_{L^2(Q_k)}, 
	\]
which follows from a standard application of the Lebesgue Dominated Convergence Theorem, together with the properties of $\Phi_{k,n}$. Moreover, it holds
\[
\lim\limits_{n\to\infty}\left[1-\Phi_{k,n}\right]_{W^{s,2}(\mathbb{R}^2)}=0.
\]
This simply follows by using the definition of $\Phi_{k,n}$, the triangle inequality and \eqref{stima1}. By using these two limits in \eqref{stimetta}, we get
	\[
	\sqrt{\lambda_1^s(\widetilde{Q}_k)}\le 
	\lim_{n\to\infty}\frac{[u]_{W^{s,2}(\mathbb{R}^2)}+\|u\|_{L^\infty(\mathbb{R}^2)}\,\left[1-\Phi_{k,n}\right]_{W^{s,2}(\mathbb{R}^2)}}{\|u\,(1-\Phi_{k,n})\|_{L^2(\widetilde{Q}_k)}}=\dfrac{[u]_{W^{s,2}(\mathbb{R}^2)}}{\|u\|_{L^2(Q_k)}}.
	\]
By arbitrariness of $u\in C^\infty_0(Q_k)\setminus\{0\}$, we get
	\[
	\lambda^s_1(\widetilde{Q}_k)\le \lambda_1^s(Q_k).
	\] 
and thus the desired conclusion \eqref{denti}.
\vskip.2cm\noindent
	{\bf Borderline case $s=1/2$.} This is more delicate, we can not use directly the sequence $\{\Phi_{k,n}\}_{n\in\mathbb{N}\setminus\{0\}}$ to construct an approximation of $u\in C^\infty_0(Q_k)$. Indeed, by owing to \eqref{stima2}, we can now guarantee that $\{\Phi_{k,n}\}_{n\in\mathbb{N}\setminus\{0\}}$ only converges weakly to $0$ in $W^{1/2,2}(\mathbb{R}^2)$, up to a subsequence.
\par
In order to ``boost'' such a sequence, we make a suitable application of {\it Mazur Lemma} (see for example \cite[Theorem 2.13]{LL}). More precisely, we define the sequence $\{F_{k,n}\}_{n\in\mathbb{N}\setminus\{0\}}\subset L^2(\mathbb{R}^2\times\mathbb{R}^2)$, given by
	\[
	F_{k,n}(x,y)=\frac{\Phi_{k,n}(x)-\Phi_{k,n}(y)}{|x-y|^{1+\frac{1}{2}}}.
	\]
	By construction, we have that 
	\[
	\|F_{k,n}\|_{L^2(\mathbb{R}^2\times\mathbb{R}^2)}=[\Phi_{k,n}]_{W^{\frac{1}{2},2}(\mathbb{R}^2)}\le C,
	\]
and $\{F_{k,n}\}_{n\in\mathbb{R}^2}$ converges weakly to $0$ in $L^2(\mathbb{R}^2\times\mathbb{R}^2)$, up to a subsequence.	
		Thanks to Mazur Lemma, we can enforce this weak convergence to the strong one, by passing to a sequence of convex combinations. More precisely, we know that for every $n\in\mathbb{N}\setminus\{0\}$ there exists 
	\[
\big\{t_\ell(n)\big\}_{\ell=1}^n\subset[0,1],\qquad \mbox{ such that }
\qquad	\sum_{\ell=1}^n t_\ell(n)=1,
	\]
and such that the new sequence made of convex combinations
	\[
	\widetilde{F}_{k,n}(x,y)=\sum_{\ell=1}^n t_\ell(n)\,F_{k,\ell}(x,y),
	\]
strongly converges in $L^2(\mathbb{R}^2\times\mathbb{R}^2)$ to $0$. Observe that by construction we have
\[
\begin{split}
\|\widetilde{F}_{k,n}\|^2_{L^2(\mathbb{R}^2\times\mathbb{R}^2)}&=\left\|\sum_{\ell=1}^n t_\ell(n)\,F_{k,\ell}\right\|^2_{L^2(\mathbb{R}^2\times\mathbb{R}^2)}\\
&=\iint_{\mathbb{R}^2\times\mathbb{R}^2} \left|\sum_{\ell=1}^n t_\ell(n)\frac{\Phi_{k,\ell}(x)-\Phi_{k,\ell}(y)}{|x-y|^{1+\frac{1}{2}}}\right|^2\,dx\,dy\\
&=\iint_{\mathbb{R}^2\times\mathbb{R}^2} \frac{\left|\sum_{\ell=1}^n t_\ell(n)\,\Phi_{k,\ell}(x)-\sum_{\ell=1}^n t_\ell(n)\,\Phi_{k,\ell}(y)\right|^2}{|x-y|^{3}}\,dx\,dy.
\end{split}
\]
Thus, if we set 
\[
\widetilde\Phi_{k,n}=\sum_{\ell=1}^n t_\ell(n)\,\Phi_{k,\ell}\in C^\infty_0(\mathbb{R}^2),
\]
the previous observations give that
\begin{equation}
\label{uffa!}
\lim_{n\to\infty} [\widetilde\Phi_{k,n}]_{W^{\frac{1}{2},2}(\mathbb{R}^2)}=\lim_{n\to\infty}\|\widetilde{F}_{k,n}\|^2_{L^2(\mathbb{R}^2\times\mathbb{R}^2)}=0.
\end{equation}
We take as in the previous case $u\in C^\infty_0(Q_k)\setminus\{0\}$. In order to approximate $u$ with functions compactly supported in $\widetilde{Q}_k$, we now define 
	\[
	\widetilde{u}_n=(1-\widetilde{\Phi}_{k,n})\,u.
	\]
We observe that this function belongs to $C^\infty_0(\widetilde Q_k)$. Indeed, observe that 
\[
\Phi_{k,\ell}(x)=1,\qquad \mbox{ for every } x\in \Sigma^{(i)}_{k,4\,\ell},\, i\in\{-(k-1),\dots,k-1\} \mbox{ and }\ell\in\{1,\dots,n\},  
\]
thus in particular
\[
\widetilde\Phi_{k,n}(x)=\sum_{\ell=1}^n t_\ell(n)\,\Phi_{k,\ell}(x)=\sum_{\ell=1}^n t_\ell(n)=1,\qquad \mbox{ for every }x\in \Sigma^{(i)}_{k,4\,n},\, i\in\{-(k-1),\dots,k-1\}, 
\]
thanks to the fact that 
\[
\Sigma^{(i)}_{k,4\,n}\subset\Sigma^{(i)}_{k,4\,\ell},\qquad \mbox{ for }\ell\in\{1,\dots,n\}.
\]
Clearly, we still have 
\begin{equation}
\label{mobbasta}
|1-\widetilde{\Phi}_{k,n}|\le 1\qquad \mbox{ and }\qquad \lim_{n\to\infty} \|\widetilde{u}_n\|_{L^2(\widetilde Q_k)}=\|u\|_{L^2(Q_k)}.
\end{equation}
We can now use $\widetilde u_n$ as a competitor for the variational problem defining $\lambda_1^s(\widetilde Q_k)$ and proceed exactly as in the case $0<s<1/2$, by using \eqref{uffa!} and \eqref{mobbasta}. This finally concludes the proof.
\begin{oss}
\label{oss:infinitecomb}
With the notation above, we obtain in particular that the {\it infinite complement comb} $\Theta:=\mathbb{R}^2\setminus \Sigma$ is an on open simply connected such that
\[
r_{\Theta}=\frac{\sqrt{5}}{2}\qquad \mbox{ and }\qquad \lambda_1^s(\Theta)=0,\ \mbox{ for } 0<s\le\frac{1}{2}.
\]
Indeed, by domain monotonicity and \eqref{l1s va a zero}, we have
\[
0\le \lambda_1^s(\Theta)\le \lim_{k\to\infty} \lambda_1^s(\widetilde{Q}_k)=0.
\]
\end{oss}

\section{Some consequences}
\label{sec:6}

We highlight in this section some consequences of our main result, by starting with a fractional analogue of property \eqref{equivalence} seen in the Introduction.

\begin{coro}
Let  $\Omega \subset\mathbb{R}^2$ be an open simply connected set. Then we have:
\begin{itemize}
\item for $1/2<s<1$
\[
\lambda_1^s(\Omega)>0 \qquad \Longleftrightarrow \qquad r_\Omega<+\infty;
\]
\item for $0<s\le 1/2$
\[
\lambda_1^s(\Omega)>0 \qquad \Longrightarrow \qquad r_\Omega<+\infty,
\]
but
\[
r_\Omega<+\infty \qquad  \not\Longrightarrow \qquad \lambda_1^s(\Omega)>0.
\]
\end{itemize}
\end{coro}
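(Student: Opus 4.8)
The plan is to assemble the statement out of three facts already established: the fractional Makai-Hayman inequality \eqref{MHs}, an elementary scaling upper bound for $\lambda_1^s$, and the counter-example of Remark \ref{oss:infinitecomb}. No new estimate is required; the only work is to sort the implications according to whether $s$ lies in $(1/2,1)$ or in $(0,1/2]$.

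First I would record the implication $\lambda_1^s(\Omega)>0\Rightarrow r_\Omega<+\infty$, which in fact holds for \emph{every} $0<s<1$ and uses neither simple connectedness nor planarity. If $B_\rho(x_0)\subset\Omega$, then any $u\in C^\infty_0(B_\rho(x_0))$, extended by zero, lies in $C^\infty_0(\Omega)$, so by the monotonicity of $\lambda_1^s$ with respect to set inclusion (inclusion of the admissible classes in \eqref{def l1s}) we get $\lambda_1^s(\Omega)\le\lambda_1^s(B_\rho(x_0))$. Translation invariance of the seminorm and of the $L^2$ norm, together with the change of variables $x\mapsto\rho\,x$ in the Rayleigh quotient (which multiplies it by $\rho^{-2\,s}$), yield $\lambda_1^s(B_\rho(x_0))=\rho^{-2\,s}\,\lambda_1^s(B_1)$. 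Letting $\rho$ tend to $r_\Omega$ and bounding $\lambda_1^s(B_1)$ from above by the Rayleigh quotient of a fixed test function, we obtain
\[
\lambda_1^s(\Omega)\le \frac{\lambda_1^s(B_1)}{r_\Omega^{2\,s}},\qquad \lambda_1^s(B_1)<+\infty.
\]
In particular $r_\Omega=+\infty$ forces $\lambda_1^s(\Omega)=0$, the contrapositive of the desired implication, and this takes care of the arrow ``$\Rightarrow$'' in both bullets at once.

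It then remains to handle the reverse directions. For $1/2<s<1$ this is precisely Theorem \ref{teo:teorema principale}: since $\mathcal{C}_s>0$, inequality \eqref{MHs} gives $\lambda_1^s(\Omega)\ge\mathcal{C}_s\,r_\Omega^{-2\,s}>0$ whenever $r_\Omega<+\infty$, so together with the previous step we get the equivalence. For $0<s\le1/2$ the reverse implication must fail, and the witness is the infinite complement comb $\Theta=\mathbb{R}^2\setminus\Sigma$ of Remark \ref{oss:infinitecomb}: it is open and simply connected, it has finite inradius $r_\Theta=\sqrt{5}/2$, yet $\lambda_1^s(\Theta)=0$ for all $0<s\le1/2$. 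This is exactly a simply connected set with $r_\Omega<+\infty$ for which $\lambda_1^s(\Omega)>0$ fails, so it establishes the non-implication in the second bullet and completes the proof.

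As for difficulties, there is essentially no obstacle: all the analytic content is carried by Theorem \ref{teo:teorema principale} and by the construction in Remark \ref{oss:infinitecomb}. The only point worth a moment's care is to set up the upper bound $\lambda_1^s(\Omega)\le\lambda_1^s(B_1)/r_\Omega^{2\,s}$ in a genuinely scale- and translation-invariant way, and to note that---unlike \eqref{MHs}---it needs neither simple connectedness nor any constraint on $s$, which is precisely why the arrow ``$\Rightarrow$'' holds uniformly over the whole range $0<s<1$.
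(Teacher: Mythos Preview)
Your proposal is correct and follows essentially the same route as the paper: the implication $\lambda_1^s(\Omega)>0\Rightarrow r_\Omega<+\infty$ via monotonicity and scaling against balls, the reverse implication for $1/2<s<1$ via Theorem~\ref{teo:teorema principale}, and the failure of the reverse implication for $0<s\le 1/2$ via the infinite complement comb of Remark~\ref{oss:infinitecomb}. Your additional remark that the first implication needs neither simple connectedness nor $N=2$ is a fair observation, consistent with the paper's argument.
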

\begin{proof}
Let $0<s<1$ and assume that $\lambda_1^s(\Omega)>0$. Let $r>0$ be such that there exists $x_0\in \Omega$ with $B_r(x_0)\subset\Omega$. By using the monotonicity of $\lambda_1^s$ with respect to set inclusion, we get
\[
\lambda_1^s(\Omega)\le \lambda_1^s(B_r(x_0))=\frac{\lambda_1^s(B_1)}{r^{2\,s}}.
\]
The previous estimate gives 
\[
r<\left(\frac{\lambda_1^s(B_1)}{\lambda_1^s(\Omega)}\right)^\frac{1}{2\,s}.
\]
By taking the supremum over admissible $r$, we get $r_\Omega<+\infty$ by definition of inradius.
\par
For the converse implication in the case $s>1/2$, it is sufficient to apply Theorem \ref{teo:teorema principale}. Finally, by taking $\Theta$ as in Remark \ref{oss:infinitecomb}, we get an open set with finite inradius, but vanishing $\lambda_1^s$ for $0<s\le 1/2$.
\end{proof}
In turn, the previous result permits to compare two different Sobolev spaces, built up of functions ``vanishing at the boundary''. More precisely, let us denote by $\mathcal{D}^{s,2}_0(\Omega)$ the {\it completion} of $C^\infty_0(\Omega)$ with respect to the norm 
\[
u\mapsto [u]_{W^{s,2}(\mathbb{R}^N)},\qquad \mbox{ for every } u\in C^\infty_0(\Omega).
\]
Observe that this is indeed a norm on $C^\infty_0(\Omega)$. We refer to \cite{BGCV} for more details on this space.
By Theorem \ref{teo:teorema principale}, for an open simply connected set $\Omega\subset\mathbb{R}^2$, the two norms
\[
[u]_{W^{s,2}(\mathbb{R}^N)}\qquad \mbox{ and }\qquad \|u\|_{W^{s,2}(\mathbb{R}^N)},
\]
are equivalent on $C^\infty_0(\Omega)$, when $1/2<s<1$. Thus we easily get the following
\begin{coro}
Let $1/2<s<1$ and let $\Omega\subset \mathbb{R}^2$ be an open simply connected set, with finite inradius. Then
\[
\mathcal{D}^{s,2}_0(\Omega)=\widetilde W^{s,2}_0(\Omega).
\]
On the contrary, for $0<s\le 1/2$ and $\Theta$ the infinite complement comb of Remark \ref{oss:infinitecomb}, the two spaces
\[
\mathcal{D}^{s,2}_0(\Theta) \qquad \mbox{ and }\qquad \widetilde W^{s,2}_0(\Theta),
\] 
can not be identified with each other.
\end{coro}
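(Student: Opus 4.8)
The plan is to treat the two ranges of $s$ separately: for $1/2<s<1$ the statement is almost immediate from what has already been recorded, while for $0<s\le 1/2$ it requires a short functional-analytic argument built on Remark \ref{oss:infinitecomb}.

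For $1/2<s<1$ I would simply unwind the definitions. By Theorem \ref{teo:teorema principale} we have $\lambda_1^s(\Omega)>0$, which is exactly the Poincar\'e inequality $\|u\|_{L^2(\mathbb{R}^2)}^2\le \lambda_1^s(\Omega)^{-1}\,[u]_{W^{s,2}(\mathbb{R}^2)}^2$ for every $u\in C^\infty_0(\Omega)$. Together with the trivial bound $[u]_{W^{s,2}(\mathbb{R}^2)}\le \|u\|_{W^{s,2}(\mathbb{R}^2)}$, this shows that on $C^\infty_0(\Omega)$ the seminorm $[\,\cdot\,]_{W^{s,2}(\mathbb{R}^2)}$ and the full norm $\|\,\cdot\,\|_{W^{s,2}(\mathbb{R}^2)}$ are equivalent. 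Hence the identity on $C^\infty_0(\Omega)$ extends to a linear isomorphism between the two completions; since $\widetilde W^{s,2}_0(\Omega)$ is concretely realized inside $L^2(\mathbb{R}^2)$ and the $L^2(\mathbb{R}^2)$-norm is controlled by $[\,\cdot\,]_{W^{s,2}(\mathbb{R}^2)}$ on $C^\infty_0(\Omega)$ via the same Poincar\'e inequality, this isomorphism realizes $\mathcal{D}^{s,2}_0(\Omega)$ as the same space of functions as $\widetilde W^{s,2}_0(\Omega)$. This proves the first assertion.

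For $0<s\le 1/2$ I would argue by contradiction. First note that there is always a canonical continuous linear map $j\colon\widetilde W^{s,2}_0(\Theta)\to\mathcal{D}^{s,2}_0(\Theta)$ which restricts to the identity on $C^\infty_0(\Theta)$: it is well defined and norm-nonincreasing precisely because $[u]_{W^{s,2}(\mathbb{R}^2)}\le\|u\|_{W^{s,2}(\mathbb{R}^2)}$, so every $\|\,\cdot\,\|_{W^{s,2}(\mathbb{R}^2)}$-Cauchy sequence of test functions is $[\,\cdot\,]_{W^{s,2}(\mathbb{R}^2)}$-Cauchy as well. Saying that the two spaces ``can be identified'' means precisely that $j$ is a topological isomorphism. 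If that were so, the bounded inverse theorem would yield a constant $C>0$ with $\|u\|_{W^{s,2}(\mathbb{R}^2)}\le C\,[u]_{W^{s,2}(\mathbb{R}^2)}$ for every $u\in C^\infty_0(\Theta)$, hence $\|u\|_{L^2(\mathbb{R}^2)}\le C\,[u]_{W^{s,2}(\mathbb{R}^2)}$ and therefore $\lambda_1^s(\Theta)\ge C^{-2}>0$, contradicting Remark \ref{oss:infinitecomb}. To make the discrepancy explicit I would also exhibit a function in $\mathcal{D}^{s,2}_0(\Theta)$ that is not in $\widetilde W^{s,2}_0(\Theta)$: since $\lambda_1^s(\Theta)=0$ there is $\{u_k\}\subset C^\infty_0(\Theta)$ with $\|u_k\|_{L^2(\mathbb{R}^2)}=1$ and $[u_k]_{W^{s,2}(\mathbb{R}^2)}\le 2^{-k}$, and since $\Theta$ is an unbounded connected open set the $u_k$ may be translated inside $\Theta$ so as to have pairwise disjoint supports; then, by the triangle inequality for the Gagliardo--Slobodecki\u{\i} seminorm, the partial sums $w_n=\sum_{k=1}^n u_k$ are $[\,\cdot\,]_{W^{s,2}(\mathbb{R}^2)}$-Cauchy, hence converge to some $w\in\mathcal{D}^{s,2}_0(\Theta)$ (represented, via the fractional Sobolev inequality $\|v\|_{L^{2/(1-s)}(\mathbb{R}^2)}\le C\,[v]_{W^{s,2}(\mathbb{R}^2)}$, by the honest function $\sum_k u_k$), whereas $\|w_n\|_{L^2(\mathbb{R}^2)}^2=\sum_{k=1}^n\|u_k\|_{L^2(\mathbb{R}^2)}^2=n$, so $w\notin L^2(\mathbb{R}^2)\supset\widetilde W^{s,2}_0(\Theta)$.

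Everything here is a routine unwinding of definitions once Theorem \ref{teo:teorema principale} and Remark \ref{oss:infinitecomb} are available; the only point demanding care is the meaning of ``identified'' in the second assertion, i.e. recognizing that the correct formulation is the non-existence of a topological isomorphism compatible with the common dense subspace $C^\infty_0(\Theta)$, and then extracting the contradiction from $\lambda_1^s(\Theta)=0$. It is also worth recording, as implicit in \cite{BGCV}, that $[\,\cdot\,]_{W^{s,2}(\mathbb{R}^2)}$ is genuinely a norm (not merely a seminorm) on $C^\infty_0(\Omega)$ for every $0<s<1$, again by the fractional Sobolev inequality, so that $\mathcal{D}^{s,2}_0(\Omega)$ is well defined and continuously embedded in $L^{2/(1-s)}(\mathbb{R}^2)$.
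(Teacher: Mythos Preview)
Your argument is correct and, for the first assertion, coincides with what the paper itself records just before the corollary: Theorem \ref{teo:teorema principale} gives equivalence of $[\,\cdot\,]_{W^{s,2}(\mathbb{R}^2)}$ and $\|\,\cdot\,\|_{W^{s,2}(\mathbb{R}^2)}$ on $C^\infty_0(\Omega)$, whence the two completions agree. The paper gives no explicit proof for the second assertion, so your bounded-inverse-theorem argument (``identification would force $\lambda_1^s(\Theta)>0$, contradicting Remark \ref{oss:infinitecomb}'') is exactly the intended unpacking and is fully sufficient on its own.

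One small point of care in your optional explicit construction: the phrase ``since $\Theta$ is an unbounded connected open set the $u_k$ may be translated inside $\Theta$'' is not quite the right justification, because a generic unbounded connected open set need not be invariant under any nontrivial translation. What makes it work here is the specific structure of $\Theta=\mathbb{R}^2\setminus\Sigma$: the set $\Sigma$ is invariant under vertical integer translations $(x_1,x_2)\mapsto(x_1,x_2+m)$, $m\in\mathbb{Z}$, hence so is $\Theta$. Thus each compactly supported $u_k\in C^\infty_0(\Theta)$ can be shifted by a suitable $(0,m_k)$ with $m_k\in\mathbb{Z}$ to arrange pairwise disjoint supports while remaining in $C^\infty_0(\Theta)$, and both $\|\cdot\|_{L^2}$ and $[\,\cdot\,]_{W^{s,2}(\mathbb{R}^2)}$ are preserved. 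With this adjustment your construction goes through; but again, the contradiction argument already settles the claim without it.
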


We now show how our main result implies some fractional versions of the classical {\it Cheeger's inequality}, a fundamental result in Spectral Geometry. At this aim, for an open set $\Omega\subset\mathbb{R}^N$ we recall the definition of {\it Cheeger constant}
\[
h_1(\Omega)=\left\{\frac{P(E)}{|E|}\, :\, E\subset \Omega \mbox{ bounded and measurable with } |E|>0\right\},
\]
and {\it $s-$Cheeger constant} (for $0<s<1$)
\[
h_s(\Omega)=\left\{\frac{P_s(E)}{|E|}\, :\, E\subset \Omega \mbox{ bounded and measurable with } |E|>0\right\},
\]
see \cite{BLP} for some properties of this constant.
Here $P$ stands for the {\it perimeter} of a set in the sense of De Giorgi, while $P_s$ is the $s-$perimeter of a set, defined by
\[
P_s(E)=[1_E]_{W^{s,1}(\mathbb{R}^N)}=\iint_{\mathbb{R}^N\times\mathbb{R}^N} \frac{|1_E(x)-1_E(y)|}{|x-y|^{N+s}}\,dx\,dy,
\]
for any measurable set $E\subset \mathbb{R}^N$.
Then we have the following
\begin{coro}[Fractional Cheeger inequality]
\label{coro:cheeger}
Let $1/2<s<1$ and let $\Omega \subset\mathbb{R}^2$ be an open simply connected set, with finite inradius. Then we have
\[
\lambda_1^s(\Omega)\ge \mathcal{C}_s\left(\frac{h_1(\Omega)}{2}\right)^{2\,s},
\]
and
\[
\lambda_1^s(\Omega)\ge \mathcal{C}_s\,\left(\frac{\pi}{P_s(B_1)}\,h_s(\Omega)\right)^2.
\]
where $\mathcal{C}_s$ is the same constant as in Theorem \ref{teo:teorema principale}.
\end{coro}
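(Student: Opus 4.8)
The plan is to combine Theorem \ref{teo:teorema principale} with the elementary fact that the inradius $r_\Omega$ of an open set is controlled from above by the reciprocal of its Cheeger constants, up to explicit dimensional factors. Indeed, for any open set $\Omega\subset\mathbb{R}^N$, testing the definition of $h_1(\Omega)$ with a ball $B_\rho(x)\subset\Omega$ gives
\[
h_1(\Omega)\le \frac{P(B_\rho(x))}{|B_\rho(x)|}=\frac{N\,\omega_N\,\rho^{N-1}}{\omega_N\,\rho^{N}}=\frac{N}{\rho},
\]
so that, taking the supremum over admissible $\rho$, we obtain $r_\Omega\le N/h_1(\Omega)$; in the planar case $N=2$ this reads $r_\Omega\le 2/h_1(\Omega)$. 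Similarly, testing the definition of $h_s(\Omega)$ with $B_\rho(x)\subset\Omega$ and using the scaling $P_s(B_\rho(x))=\rho^{N-s}\,P_s(B_1)$ together with $|B_\rho(x)|=\omega_N\,\rho^N$ yields
\[
h_s(\Omega)\le \frac{P_s(B_\rho(x))}{|B_\rho(x)|}=\frac{P_s(B_1)}{\omega_N}\,\rho^{-s},
\]
whence $r_\Omega\le \big(P_s(B_1)/(\omega_N\,h_s(\Omega))\big)^{1/s}$, which in the plane (where $\omega_2=\pi$) becomes $r_\Omega\le \big(P_s(B_1)/(\pi\,h_s(\Omega))\big)^{1/s}$.

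With these two inequalities at hand, the conclusion is immediate. Since $\Omega$ has finite inradius by assumption and is open and simply connected with $1/2<s<1$, Theorem \ref{teo:teorema principale} applies and gives
\[
\lambda_1^s(\Omega)\ge \frac{\mathcal{C}_s}{r_\Omega^{2\,s}}.
\]
Substituting $r_\Omega\le 2/h_1(\Omega)$ into this bound produces
\[
\lambda_1^s(\Omega)\ge \mathcal{C}_s\,\left(\frac{h_1(\Omega)}{2}\right)^{2\,s},
\]
which is the first claimed inequality. Substituting instead $r_\Omega\le \big(P_s(B_1)/(\pi\,h_s(\Omega))\big)^{1/s}$, so that $r_\Omega^{2s}\le \big(P_s(B_1)/(\pi\,h_s(\Omega))\big)^{2}$, gives
\[
\lambda_1^s(\Omega)\ge \mathcal{C}_s\,\left(\frac{\pi}{P_s(B_1)}\,h_s(\Omega)\right)^{2},
\]
which is the second claimed inequality.

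I do not expect any genuine obstacle here: the corollary is a formal consequence of Theorem \ref{teo:teorema principale} together with the trivial upper bounds on the inradius by the (local and nonlocal) Cheeger constants, obtained by using inscribed balls as competitors. The only points requiring a line of care are the correct scaling exponent of the $s$-perimeter, $P_s(B_\rho)=\rho^{N-s}P_s(B_1)$, and keeping track of the constants $\omega_2=\pi$ and $P(B_1)=2\pi$ in the plane; both are routine. One should also note that the finiteness of $r_\Omega$ assumed in the statement guarantees both Cheeger constants are strictly positive, so the right-hand sides are meaningful.
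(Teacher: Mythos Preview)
Your proof is correct and follows essentially the same approach as the paper: both combine the fractional Makai--Hayman inequality of Theorem~\ref{teo:teorema principale} with the trivial upper bounds on the Cheeger constants obtained by testing with inscribed balls. The only difference is cosmetic---you work in general dimension before specializing to $N=2$ and spell out the $s$-perimeter scaling for the second inequality, whereas the paper computes directly in the plane and dispatches the second inequality with ``obtained in exactly the same way.''
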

\begin{proof}
Let $r<r_\Omega$, by definition of inradius there exists a disk $B_r(x_0)\subset \Omega$. By using this disk as a competitor for the minimization problem defining $h_1(\Omega)$, we get
\[
h_1(\Omega)\le \frac{2\,\pi\,r}{\pi\,r^2}=\frac{2}{r}.
\]
By taking the supremum over admissible $r$, we get
\[
h_1(\Omega)\le \frac{2}{r_\Omega}.
\]
By raising to the power $2\,s$ and using Theorem \ref{teo:teorema principale}, we get the first inequality. The second one can be obtained in exactly the same way.
\end{proof}
Finally, we have the following result, which permits to compare $\lambda_1^s(\Omega)$ and $\lambda_1(\Omega)$, for simply connected sets in the plane. We refer to \cite[Theorem 6.1]{BS} and \cite[Theorem 4.5]{CS} for a similar result in general dimension $N\ge 2$, under stronger regularity assumptions on the sets.
\begin{coro}[Comparison of eigenvalues]
Let $1/2<s<1$ and let $\Omega \subset\mathbb{R}^2$ be an open simply connected set, with finite inradius. Then we have
\begin{equation}
\label{eigencomp}
\alpha_s\,\Big(\lambda_1(\Omega)\Big)^s\le \lambda_1^s(\Omega)\le \beta_s\,\Big(\lambda_1(\Omega)\Big)^s,
\end{equation}
where $\alpha_s,\beta_s$ are two positive constants depending on $s$ only, such that 
\[
\alpha_s\sim \left(s-\frac{1}{2}\right),\ \mbox{ for } s\searrow \frac{1}{2},\qquad \mbox{ and }\qquad 
\alpha_s\sim \frac{1}{1-s},\ \mbox{ for } s\nearrow 1,
\]
\[
\beta_s\sim \frac{1}{1-s},\ \mbox{ for } s\nearrow 1.
\]
\end{coro}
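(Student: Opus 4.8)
The plan is to establish \eqref{eigencomp} by combining three ingredients: the fractional Makai–Hayman inequality (Theorem \ref{teo:teorema principale}), its classical counterpart \eqref{MH}, and the general two-sided comparison between $\lambda_1^s$ and $\lambda_1$ that holds \emph{on any bounded open set} via an interpolation-type estimate for the Gagliardo–Slobodecki\u{\i} seminorm. More precisely, I would first recall the elementary inequality relating seminorms at different scales: for every $u\in C^\infty_0(\Omega)$ one has, by splitting the double integral over $|x-y|\le R$ and $|x-y|>R$ and optimizing in $R$,
\[
[u]^2_{W^{s,2}(\mathbb{R}^N)}\le C_{N,s}\,\|\nabla u\|_{L^2}^{2s}\,\|u\|_{L^2}^{2(1-s)},
\]
which is exactly the interpolation inequality \cite[Corollary 2.2]{BPS} already invoked in Section \ref{sec:5}. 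Dividing by $\|u\|_{L^2}^2$ and taking the infimum over $u$ yields the \emph{upper} bound $\lambda_1^s(\Omega)\le \beta_s\,(\lambda_1(\Omega))^s$, valid for every open set with $\lambda_1(\Omega)>0$, with an explicit $\beta_s$ whose $s\nearrow 1$ asymptotics $\beta_s\sim 1/(1-s)$ come from tracking the constant $C_{N,s}$ in the interpolation inequality (which blows up like $1/(1-s)$, consistently with \eqref{BBM}).

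For the \emph{lower} bound I would proceed as follows. On one hand, Theorem \ref{teo:teorema principale} gives $\lambda_1^s(\Omega)\ge \mathcal{C}_s\,r_\Omega^{-2s}$. On the other hand, the classical Makai–Hayman inequality \eqref{MH} together with the sharp upper bound $\lambda_1(\Omega)\le \lambda_1(B_1)/r_\Omega^2$ recalled in the Introduction gives
\[
\frac{C_{MH}}{r_\Omega^2}\le \lambda_1(\Omega)\le \frac{\lambda_1(B_1)}{r_\Omega^2},
\]
so that $r_\Omega^{-2}$ and $\lambda_1(\Omega)$ are comparable with universal constants (this is where simple connectedness is used). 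Raising the left-hand inequality $\lambda_1(\Omega)\le \lambda_1(B_1)\,r_\Omega^{-2}$ to the power $s$ yields $r_\Omega^{-2s}\ge (\lambda_1(\Omega)/\lambda_1(B_1))^s$, and plugging this into the fractional Makai–Hayman bound gives
\[
\lambda_1^s(\Omega)\ge \mathcal{C}_s\,r_\Omega^{-2s}\ge \frac{\mathcal{C}_s}{\lambda_1(B_1)^s}\,\big(\lambda_1(\Omega)\big)^s =: \alpha_s\,\big(\lambda_1(\Omega)\big)^s.
\]
Since $\lambda_1(B_1)^s$ is bounded above and below by universal constants for $s\in(1/2,1)$, the asymptotics of $\alpha_s$ are inherited directly from those of $\mathcal{C}_s$ stated in Theorem \ref{teo:teorema principale}, namely $\alpha_s\sim (s-1/2)$ as $s\searrow 1/2$ and $\alpha_s\sim 1/(1-s)$ as $s\nearrow 1$.

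I do not expect any serious obstacle here: the argument is essentially a concatenation of already-proved estimates, and the only point requiring mild care is bookkeeping the $s$-dependence of the constant in the interpolation inequality for the upper bound — one must verify that \cite[Corollary 2.2]{BPS} indeed produces a constant of order $1/(1-s)$ as $s\nearrow 1$ (equivalently, that the resulting $\beta_s$ matches the blow-up predicted by \eqref{BBM}), rather than, say, a worse or singular behaviour. Everything else is monotonicity of $\lambda_1^s$ and $\lambda_1$ under inclusion, the scaling identities, and the two Makai–Hayman inequalities, all available in the excerpt. One should also note that the hypothesis of finite inradius guarantees $\lambda_1(\Omega)>0$ via \eqref{MH}, so that both sides of \eqref{eigencomp} are finite and strictly positive, making the manipulations legitimate.
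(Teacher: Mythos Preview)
Your proposal is correct and follows essentially the same route as the paper: the lower bound is obtained exactly as you describe, by combining Theorem~\ref{teo:teorema principale} with the elementary upper bound $\lambda_1(\Omega)\le \lambda_1(B_1)/r_\Omega^2$ to get $\alpha_s=\mathcal{C}_s/\lambda_1(B_1)^s$, while for the upper bound the paper simply cites \cite[Theorem 6.1]{BS} (which yields the explicit constant $\beta_s=4^{1-s}\pi/(s(1-s))$) rather than re-deriving it from the interpolation inequality as you sketch. Note that the classical Makai--Hayman inequality \eqref{MH} you invoke is in fact not needed for the lower bound --- only the trivial upper estimate on $\lambda_1(\Omega)$ enters --- so simple connectedness is used solely through Theorem~\ref{teo:teorema principale}.
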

\begin{proof}
The upper bound follows directly from the general result of \cite[Theorem 6.1]{BS}, see equation (6.1) there. From this reference, we can also extract a value for the constant $\beta_s$, which is given by
\[
\beta_s=\frac{4^{1-s}}{s\,(1-s)}\,\pi.
\]
For the lower bound, the proof is similar to that of Corollary \ref{coro:cheeger}, it is sufficient to join the estimate
\[
\lambda_1(\Omega)\le \frac{\lambda_1(B_1)}{r_\Omega^2},
\]
with Theorem \ref{teo:teorema principale}. This gives the claimed estimate, with constant 
\[
\alpha_s=\frac{\mathcal{C}_s}{(\lambda_1(B_1))^s},
\]
and $\mathcal{C}_s$ is the same as in \eqref{MHs}.
\end{proof}
\begin{oss}
The lower bound in estimate \eqref{eigencomp} degenerates as $s$ approaches $1/2$. This behaviour is optimal: indeed, observe that for the set $\Theta$ of Remark \ref{oss:infinitecomb} we have 
\[
\lambda_1(\Theta)>0\qquad \mbox{ and }\qquad \lambda_1^s(\Theta)=0,\ \mbox{ for } 0<s\le \frac{1}{2}.
\]
The first fact follows from \cite[Theorem 1, Section 15.4.2]{Maz}, for example. 
Thus the lower bound can not hold for this range of values.
\end{oss}

\appendix

\section{A one-dimensional Poincar\'e inequality}\label{appendice}
In what follows, we introduce the following norm on the one-dimensional torus $\mathbb{S}^1=\mathbb{R}/{(2\,\pi\,\mathbb{Z})}$
\[
|\alpha|_{\mathbb{S}^1}:=\min\limits_{k\in\mathbb{Z}}|\alpha+2\,k\,\pi|,\qquad \mbox{ for every } \alpha\in\mathbb{R}.
\]
We observe in particular for $\alpha\in[0,2\,\pi]$ this quantity is given by
\begin{equation}
\label{normaT}
|\alpha|_{\mathbb{S}^1}=\left\{\begin{array}{cc}
\alpha, & \mbox{ if } 0\le \alpha\le \pi,\\
2\,\pi-\alpha, & \mbox{ if } \pi<\alpha\le 2\,\pi.
\end{array}
\right.
\end{equation}
\begin{lemma}
\label{lm:equivalenza}
	We have
	\[
	\frac{2}{\pi}\,|\theta-\varphi|_{\mathbb{S}^1}\le|e^{i\,\theta}-e^{i\,\varphi}|\le |\theta-\varphi|_{\mathbb{S}^1},\qquad \mbox{ for every }\theta,\varphi\in \mathbb{R}.
	\] 
Moreover, both inequalities are sharp.	
\end{lemma}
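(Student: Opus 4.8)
The plan is to prove the chain of inequalities
\[
\frac{2}{\pi}\,|\theta-\varphi|_{\mathbb{S}^1}\le|e^{i\,\theta}-e^{i\,\varphi}|\le |\theta-\varphi|_{\mathbb{S}^1}
\]
by reducing everything to a single real variable. Writing $e^{i\,\theta}-e^{i\,\varphi}=e^{i\,\varphi}\,(e^{i(\theta-\varphi)}-1)$ and using $|e^{i\,\varphi}|=1$, the quantity $|e^{i\,\theta}-e^{i\,\varphi}|$ depends only on $\alpha:=\theta-\varphi$, and indeed $|e^{i\,\alpha}-1|=\sqrt{(\cos\alpha-1)^2+\sin^2\alpha}=\sqrt{2-2\cos\alpha}=2\,|\sin(\alpha/2)|$. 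Since $\alpha\mapsto|e^{i\,\alpha}-1|$ and $\alpha\mapsto|\alpha|_{\mathbb{S}^1}$ are both $2\pi$-periodic and even, it suffices to check both inequalities for $\alpha\in[0,\pi]$, where by \eqref{normaT} we have $|\alpha|_{\mathbb{S}^1}=\alpha$. So the claim reduces to
\[
\frac{2}{\pi}\,\alpha\le 2\,\sin\frac{\alpha}{2}\le \alpha,\qquad \mbox{ for every } \alpha\in[0,\pi],
\]
equivalently, setting $t=\alpha/2\in[0,\pi/2]$, to $\dfrac{2}{\pi}\,t\le \sin t\le t$ on $[0,\pi/2]$.

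The upper bound $\sin t\le t$ is the standard inequality, valid for all $t\ge 0$ (e.g. from concavity of $\sin$ on $[0,\pi]$ together with $\sin 0=0$, or by integrating $\cos\le 1$). The lower bound $\dfrac{2}{\pi}\,t\le \sin t$ on $[0,\pi/2]$ is Jordan's inequality: since $\sin$ is concave on $[0,\pi/2]$, its graph lies above the chord joining $(0,0)$ and $(\pi/2,1)$, which is exactly the line $t\mapsto (2/\pi)\,t$. Combining, $\sin t\le t\le 1$-type manipulations are unnecessary; one only transcribes these two elementary facts back through $t=\alpha/2$.

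For sharpness of both inequalities: the upper inequality $|e^{i\,\theta}-e^{i\,\varphi}|\le|\theta-\varphi|_{\mathbb{S}^1}$ is attained asymptotically as $\theta-\varphi\to 0$, since $2\sin(\alpha/2)/\alpha\to 1$; in fact one should note it cannot be attained for $\alpha\ne 0$ because $\sin t<t$ strictly for $t>0$, so the constant $1$ is the best possible. The lower inequality is attained with equality exactly at $\alpha=\pi$ (i.e. $\theta-\varphi=\pi$): there $|e^{i\,\theta}-e^{i\,\varphi}|=2$ and $(2/\pi)\,|\theta-\varphi|_{\mathbb{S}^1}=(2/\pi)\,\pi=2$, so the constant $2/\pi$ cannot be improved. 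I would record both observations explicitly to justify the word ``sharp''.

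I do not anticipate any genuine obstacle here: the whole statement collapses, after the reduction to $|e^{i\,\alpha}-1|=2\,\sin(\alpha/2)$ and the periodicity/parity reductions, to Jordan's inequality and $\sin t\le t$. The only point requiring a little care is the bookkeeping of the reduction to $\alpha\in[0,\pi]$ — in particular checking that the formula \eqref{normaT} for $|\alpha|_{\mathbb{S}^1}$ together with periodicity and evenness of $2\,|\sin(\alpha/2)|$ really does let one restrict to that interval without loss of generality — but this is routine.
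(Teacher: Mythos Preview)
Your proposal is correct and follows essentially the same route as the paper: both reduce to $|e^{i\alpha}-1|=2\,|\sin(\alpha/2)|$, restrict to a fundamental interval by periodicity, and then invoke the elementary bounds $\frac{2}{\pi}\,t\le\sin t\le t$ on $[0,\pi/2]$ (the paper phrases this via monotonicity of the sinc function, you via Jordan's inequality and concavity---these are the same fact). The only cosmetic difference is that you use evenness to restrict to $[0,\pi]$ directly, whereas the paper works on $[0,2\pi]$ and treats the two halves via the piecewise formula \eqref{normaT}; your reduction is slightly cleaner but not materially different.
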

\begin{proof}
We first observe that we can write
\begin{equation}
\label{complessi}
\begin{split}
|e^{i\,\theta}-e^{i\,\varphi}|&=|e^{i\,\varphi}|\,|e^{i\,(\theta-\varphi)}-1|\\
&=|e^{i\,(\theta-\varphi)}-1|\\
&=\sqrt{(1-\cos (\theta-\varphi))^2+\sin^2(\theta-\varphi)}=2\,\left|\sin\left(\frac{\theta-\varphi}{2}\right)\right|,
\end{split}
\end{equation}
thanks to standard trigonometric formulas. In order to conclude the proof, it is sufficient to prove that 
\begin{equation}
\label{ridotta}
\frac{2}{\pi}\,|\alpha|_{\mathbb{S}^1}\le 2\,\left|\sin\left(\frac{\alpha}{2}\right)\right|\le |\alpha|_{\mathbb{S}^1},\qquad \mbox{ for every } \alpha\in\mathbb{R}.
\end{equation}
It is easily see that both functions
\[
\alpha\mapsto |\alpha|_{\mathbb{S}^1} \qquad \mbox{ and }\qquad \alpha\mapsto \left|\sin\left(\frac{\alpha}{2}\right)\right|,
\]
are $2\,\pi-$periodic, thus is it sufficient to prove \eqref{ridotta} for $\alpha\in[0,2\,\pi]$. We thus seek for the maximum and the minimum on $[0,2\,\pi]$ of the function
\[
\alpha\mapsto 2\,\frac{|\sin(\alpha/2)|}{|\alpha|_{\mathbb{S}^1}},
\]	
extended by continuity to the whole interval. 
By keeping in mind \eqref{normaT}, on $[0,2\,\pi]$ this function can be rewritten as
\[
\alpha\mapsto \left\{\begin{array}{cc}
2\,\dfrac{\sin(\alpha/2)}{\alpha},& \mbox{ if } 0\le \alpha\le \pi,\\
&\\
2\,\dfrac{\sin(\alpha/2)}{2\,\pi-\alpha},& \mbox{ if } \pi\le \alpha\le 2\,\pi,
\end{array}
\right.=\left\{\begin{array}{cc}
\dfrac{\sin(\alpha/2)}{\alpha/2},& \mbox{ if } 0\le \alpha\le \pi,\\
&\\
\dfrac{\sin(\pi-\alpha/2)}{\pi-\alpha/2},& \mbox{ if } \pi\le \alpha\le 2\,\pi.
\end{array}
\right.
\]
By recalling that the {\it sinc function} $t\mapsto (\sin t)/t$ is monotone decreasing on the interval $[0,\pi/2]$, in light of the above discussion we now easily obtain
\[
\frac{2}{\pi}\,\le 2\,\frac{|\sin(\alpha/2)|}{|\alpha|_{\mathbb{S}^1}}\le 1.
\]
This gives \eqref{ridotta}, thus concluding the proof. 
\end{proof}
The main result of this appendix is the following one-dimensional Poincar\'e inequality, for periodic functions vanishing at a point. The result is probably well-known, but as always we want to pay particular attention to the dependence of the constant on the parameter $s$. For $T>0$, we define the one-dimensional torus $\mathbb{S}^1_T=\mathbb{R}/{(T\,\mathbb{Z})}$, endowed with the norm
\[
|\theta-\varphi|_{\mathbb{S}_T^1}=\min\limits_{k\in\mathbb{Z}}|\theta-\varphi+k\,T|,\qquad \mbox{ for }\theta,\varphi\in \mathbb{R}.
\]
\begin{prop}
\label{prop poincare angolare}
	Let $1/2<s<1$ and $T>0$. Let $\theta_0\in[0,T]$, there exists a constant $\mu_s>0$ depending on $s$ only such that for every $w\in C^1(\mathbb{R})$ which is $T-$periodic and vanishing at $\theta_0$, we have
	\begin{equation}
	\label{1dpoin}
\mu_s\,\left(\frac{2\,\pi}{T}\right)^{2\,s}\,\int_0^T |w(\theta)|^2\,d\theta\le \iint_{[0,T]\times[0,T]} \frac{|w(\theta)-w(\varphi)|^2}{|\theta-\varphi|_{\mathbb{S}_T^1}^{1+2\,s}}\,d\theta\,d\varphi,
	\end{equation}
	Moreover, the constant $\mu_s$ has the following asymptotic behaviours
	\[
	\mu_s\sim \left(s-\frac{1}{2}\right),\ \mbox{ for } s\searrow \frac{1}{2}
\qquad	\mbox{ and }\qquad \mu_s\sim \frac{1}{1-s},\ \mbox{ for } s\nearrow 1.
	\]
\end{prop}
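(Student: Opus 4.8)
The plan is to reduce to the case $T = 2\pi$ by scaling, and then to establish \eqref{1dpoin} on the standard torus $\mathbb{S}^1$ by a compactness-plus-continuity argument combined with an explicit control of the constant in the two asymptotic regimes. First I would note that if $w$ is $T$-periodic and vanishes at $\theta_0$, the rescaled function $\widetilde w(\theta) := w(T\theta/(2\pi))$ is $2\pi$-periodic and vanishes at $2\pi\theta_0/T$, and a change of variables in both the $L^2$ norm and the double integral shows that \eqref{1dpoin} for general $T$ is equivalent to the case $T = 2\pi$, with the factor $(2\pi/T)^{2s}$ absorbing the scaling. Moreover, by translating we may assume $\theta_0 = 0$.

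The core is then: there exists $\mu_s > 0$ such that for every $2\pi$-periodic $w \in C^1(\mathbb{R})$ with $w(0) = 0$,
\[
\mu_s\,\int_0^{2\pi} |w(\theta)|^2\,d\theta \le \iint_{[0,2\pi]^2} \frac{|w(\theta) - w(\varphi)|^2}{|\theta - \varphi|_{\mathbb{S}^1}^{1+2s}}\,d\theta\,d\varphi.
\]
To see that \emph{some} positive constant works, I would argue by contradiction and compactness: if $\mu_s$ were $0$, there would be a sequence $w_n$ with $\int |w_n|^2 = 1$, $w_n(0) = 0$, and Gagliardo seminorm on the torus tending to $0$. The seminorm controls a fractional Sobolev norm on $\mathbb{S}^1$ (using $|w_n|^2$ bounded), and since $s > 1/2$ we have the compact embedding $W^{s,2}(\mathbb{S}^1) \hookrightarrow C^0(\mathbb{S}^1)$; a subsequence converges strongly in $L^2$ and uniformly to some $w_\infty$ with zero seminorm, hence $w_\infty$ constant, with $\int|w_\infty|^2 = 1$, yet $w_\infty(0) = \lim w_n(0) = 0$ by uniform convergence — a contradiction. (The condition $s > 1/2$ enters exactly here, through the continuity of the trace at a point; this is the conceptual heart of the result, and the reason the analogous statement — and hence the whole Makai–Hayman bound — fails for $s \le 1/2$.)

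The genuinely delicate part, and the one I expect to be the main obstacle, is the \textbf{quantitative behaviour of $\mu_s$} at the two endpoints, since a soft compactness argument gives no rate. For $s \nearrow 1$ I would exploit the Bourgain–Brezis–Mironescu asymptotics: $(1-s)$ times the torus seminorm converges to a constant multiple of $\int_0^{2\pi} |w'|^2\,d\theta$, while the classical Poincaré inequality for $2\pi$-periodic functions vanishing at a point gives $\int |w|^2 \le C \int |w'|^2$ with a dimensional constant; combining these, $\mu_s$ can be taken $\sim 1/(1-s)$, and one must be careful to get a lower bound uniform in $s$ near $1$ rather than just a limit (e.g. by a direct estimate $\iint \frac{|w(\theta)-w(\varphi)|^2}{|\theta-\varphi|_{\mathbb{S}^1}^{1+2s}} \ge c\,(1-s)^{-1}\,\lambda\int|w|^2$ obtained by writing the difference quotient as an average of $|w'|^2$ along the shorter arc and using Fubini). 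For $s \searrow 1/2$, the constant must degenerate like $(s - 1/2)$, which I would capture by testing the inequality on a near-optimal competitor — a function that is essentially $0$ near $\theta = 0$ and jumps up, mimicking a small ``spike'' — whose Gagliardo energy on the torus is $\sim (s-1/2)$ times its $L^2$ norm as $s \searrow 1/2$ (this reflects the vanishing $s$-capacity of a point in one dimension when $2s \le 1$); pairing the lower bound from a careful direct estimate with this upper bound pins down $\mu_s \sim (s - 1/2)$. Assembling the scaling reduction, the compactness argument, and the two endpoint estimates yields the stated inequality with the claimed asymptotics for $\mu_s$.
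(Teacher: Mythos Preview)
Your reduction by scaling and translation to $T=2\pi$, $\theta_0=0$ matches the paper, and your compactness argument does establish $\mu_s>0$ for each fixed $s\in(1/2,1)$. But the proposition asks for a constant with the stated asymptotics, and here there is a genuine gap: for $s\searrow 1/2$ you offer no mechanism to prove that the inequality \emph{holds} with a constant of order $(s-1/2)$. Testing with a near-capacitary competitor shows only that the \emph{sharp} constant cannot exceed order $(s-1/2)$; it says nothing about the lower bound on the seminorm that the proposition actually asserts. Your compactness argument is non-quantitative by design, and the phrase ``a careful direct estimate'' is precisely the missing idea --- you have not indicated what structure of the problem would produce the factor $(s-1/2)$. (Your sketch for $s\nearrow 1$ via a Fubini/average-of-$|w'|^2$ argument is more plausible, though also left vague.)

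The paper's proof is entirely different and bypasses compactness: it expands $w$ in Fourier series. A direct computation rewrites the torus seminorm as
\[
2\pi\sum_{n\neq 0}\left(\int_0^{2\pi}\frac{|e^{ihn}-1|^2}{|h|_{\mathbb S^1}^{1+2s}}\,dh\right)|\widehat w(n)|^2,
\]
and bounds the bracket below by $C_{1,s}\,|n|^{2s}$ with $C_{1,s}=8\pi\int_0^\pi(1-\cos\tau)\,\tau^{-1-2s}\,d\tau$. For the $L^2$ side, the vanishing $w(0)=0$ gives $\widehat w(0)=-\sum_{n\neq 0}\widehat w(n)$, and Cauchy--Schwarz with weights $|n|^{\pm s}$ yields $\int|w|^2\le C_{2,s}\sum_{n}|n|^{2s}|\widehat w(n)|^2$ with $C_{2,s}=2\pi\big(1+2\,\zeta(2s)\big)$. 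The constant $\mu_s=C_{1,s}/C_{2,s}$ is then explicit: the simple pole of $\zeta(2s)$ at $s=1/2$ makes $C_{2,s}\sim(s-1/2)^{-1}$, giving $\mu_s\sim(s-1/2)$, while the divergence of $\int_0^\pi\tau^{1-2s}\,d\tau$ makes $C_{1,s}\sim(1-s)^{-1}$, giving $\mu_s\sim(1-s)^{-1}$. This Fourier route is exactly what supplies the quantitative control at $s=1/2$ that your soft argument cannot.
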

\begin{proof} Without loss of generality, we can assume that $\theta_0=0$ and $T=2\,\pi$. Thus, in this case we have $|\cdot|_{\mathbb{S}_{2\pi}^1}=|\cdot|_{\mathbb{S}^1}$, with the notation of Lemma \ref{lm:equivalenza}.\par
Thanks to the periodicity of $w$, we can expand it in Fourier series, i.e. we can write 
	\[
	w(\theta)=\sum\limits_{n\in\mathbb{Z}}\widehat{w}(n)\,e^{in\theta},\qquad\mbox{where }\quad \widehat{w}(n)=\frac{1}{2\,\pi}\,\int_{0}^{2\,\pi}w(\theta)\,e^{-i\,n\,\theta}\,d\theta.
	\]
The series is uniformly converging, thanks to the assumption on $w$.
	We will achieve the claimed result by joining the following two estimates
	\begin{equation}\label{disug seminorma fourier}
	\iint_{[0,2\,\pi]\times[0,2\,\pi]} \frac{|w(\theta)-w(\varphi)|^2}{|\theta-\varphi|_{\mathbb{S}^1}^{1+2\,s}}\,d\theta\,d\varphi\ge  C_{1,s}\,\sum\limits_{n\in\mathbb{Z}}|n|^{2\,s}\,|\widehat{w}(n)|^2,
	\end{equation}
and	
\begin{equation}\label{disug norma l2 fourier}
	\int_{0}^{2\,\pi} |w(\theta)|^2\,d\theta\le C_{2,s}\,\sum\limits_{n\in\mathbb{Z}}|n|^{2\,s}\,|\widehat{w}(n)|^2,
	\end{equation}
that we prove separately. This would give \eqref{1dpoin}, with constant $\mu_s=C_{1,s}/C_{2,s}$. In the last part of the proof, we will then prove that such a constant has the claimed asymptotics.
\vskip.2cm\noindent
{\bf Proof of \eqref{disug seminorma fourier}}.
We proceed similarly as in the proof of \cite[Proposition 3.4]{DPV}, with suitable adaptations. The latter deals with $W^{s,2}$ functions on $\mathbb{R}$ and their Fourier transform.
\par
First of all, we rewrite the Gagliardo-Slobodecki\u{\i} seminorm as follows:
let us apply the change of variable $h=\varphi-\theta$, so to get
\[
	\begin{split}
	\int_0^{2\,\pi}\int_0^{2\,\pi}\frac{|w(\varphi)-w(\theta)|^2}{|\varphi-\theta|_{\mathbb{S}^1}^{1+2\,s}}\,d\theta\,d\varphi&=\int_0^{2\,\pi}\int_{-\theta}^{2\,\pi-\theta}\frac{|w(\theta+h)-w(\theta)|^2}{|h|_{\mathbb{S}^1}^{1+2\,s}}\,d\theta\,dh\\
	&=\int_{0}^{2\,\pi}\int_{-\theta}^{0}\frac{|w(\theta+h)-w(\theta)|^2}{|h|_{\mathbb{S}^1}^{1+2\,s}}\,d\theta\,dh\\
	&+\int_{0}^{2\,\pi}\int_{0}^{2\,\pi}\frac{|w(\theta+h)-w(\theta)|^2}{|h|_{\mathbb{S}^1}^{1+2\,s}}\,d\theta\,dh\\
	&-\int_{0}^{2\,\pi}\int_{2\,\pi-\theta}^{2\,\pi}\frac{|w(\theta+h)-w(\theta)|^2}{|h|_{\mathbb{S}^1}^{1+2\,s}}\,d\theta\,d h.
	\end{split}
	\]
	On the third integral, we can use that the integrand is $2\,\pi-$periodic, thus we get
\[
\begin{split}
\int_{0}^{2\,\pi}\int_{2\,\pi-\theta}^{2\,\pi}\frac{|w(\theta+h)-w(\theta)|^2}{|h|_{\mathbb{S}^1}^{1+2\,s}}\,d\theta\,dh&=\int_{0}^{2\,\pi}\int_{2\,\pi-\theta}^{2\,\pi}\frac{|w(\theta+h-2\,\pi)-w(\theta)|^2}{|h-2\,\pi|_{\mathbb{S}^1}^{1+2\,s}}\,d\theta\,dh\\
&=\int_{0}^{2\,\pi}\int_{-\theta}^{0}\frac{|w(\theta+\eta)-w(\theta)|^2}{|\eta|_{\mathbb{S}^1}^{1+2\,s}}\,d\theta\,d\eta.
\end{split}
\]
This finally permits to infer that
\[
\begin{split}
\int_0^{2\,\pi}\int_0^{2\,\pi}\frac{|w(\varphi)-w(\theta)|^2}{|\varphi-\theta|_{\mathbb{S}^1}^{1+2\,s}}\,d\theta\,d\varphi&=\int_{0}^{2\,\pi}\int_{0}^{2\,\pi}\frac{|w(\theta+h)-w(\theta)|^2}{|h|_{\mathbb{S}^1}^{1+2\,s}}\,d\theta\,dh.
	\end{split}
	\]
	By recalling \eqref{normaT}, we can conclude that
	\begin{equation}\label{seminorma con h}
	\begin{split}
	\int_{0}^{2\,\pi}\int_{0}^{2\,\pi}\frac{|w(\theta+h)-w(\theta)|^2}{|h|_{\mathbb{S}^1}^{1+2\,s}}\,d\theta\,dh&= \int_{0}^\pi\frac{1}{h^{1+2\,s}}\left(\int_{0}^{2\,\pi}|w(\theta+h)-w(\theta)|^2\,d\theta\right)\,dh\\
	&+\int_{\pi}^{2\,\pi}\frac{1}{(2\,\pi-h)^{1+2\,s}}\left(\int_{0}^{2\,\pi}|w(\theta+h)-w(\theta)|^2\,d\theta\right)\,dh.
	\end{split}
	\end{equation}
	Now, for every $h$ we denote by $w_h(\theta)$ the translation $w_h(\theta)=w(\theta+h)$. 
	Thanks to the well-known properties of the Fourier coefficients, we have
	\begin{equation}\label{traslazione fourier}
	\begin{split}
	\widehat{w}_h(n)=e^{i\,h\,n}\,\widehat{w}(n),\qquad \mbox{ for every } n\in\mathbb{Z}.
	\end{split}
	\end{equation}
	By using Plancherel's identity in \eqref{seminorma con h} and then applying \eqref{traslazione fourier}, we finally obtain 
	\begin{equation}\label{seminorma con fourier}
	\begin{split}
	\int_{0}^{2\,\pi}\int_{0}^{2\,\pi}&\frac{|w(\theta+h)-w(\theta)|^2}{|h|_{\mathbb{S}^1}^{1+2\,s}}\,d\theta\,dh=\\
	&=2\,\pi\,\sum_{n\in\mathbb{Z}\setminus\{0\}}\left(\int_{0}^\pi\frac{|e^{i\,h\,n}-1|^2}{h^{1+2\,s}}\,dh+\int_{\pi}^{2\,\pi}\frac{|e^{i\,h\,n}-1|^2}{(2\,\pi-h)^{1+2\,s}}\,dh\right)\,|\widehat{w}(n)|^2.
	\end{split}
	\end{equation}
By recalling the identities \eqref{complessi}, we have
\[
|e^{i\,h\,n}-1|^2=2\,(1-\cos(h\,n)),
\]
and applying the change of variable $\tau=h\,n$ with $n\in\mathbb{Z}\setminus\{0\}$,
we can rewrite the first integral on the right-hand side of \eqref{seminorma con fourier} as
	\[
	\begin{split}
	\int_{0}^\pi\frac{|e^{i\,h\,n}-1|^2}{h^{1+2\,s}}\,dh&=2\,\int_{0}^\pi\frac{1-\cos(h\,n)}{h^{1+2\,s}}\,dh\\
	&=2\,\int_{0}^{\pi\,n}\frac{1-\cos \tau}{\left(\dfrac{\tau}{n}\right)^{1+2\,s}}\,\frac{d\tau}{n}\ge 2\,|n|^{2\,s}\,\int_{0}^{\pi} \frac{1-\cos\tau}{\tau^{2\,s}}\,\frac{d\tau}{\tau}.
	\end{split}
	\]
For the second integral, it is sufficient to observe that by periodicity
\[
\begin{split}
\int_{\pi}^{2\,\pi}\frac{|e^{i\,h\,n}-1|^2}{(2\,\pi-h)^{1+2\,s}}\,dh&=2\,\int_{\pi}^{2\,\pi}\frac{1-\cos(h\,n)}{(2\,\pi-h)^{1+2\,s}}\,dh\\
&=2\,\int_{\pi}^{2\,\pi}\frac{1-\cos(2\,\pi\,n-h\,n)}{(2\,\pi-h)^{1+2\,s}}\,dh\\
&=2\,\int_{0}^\pi\frac{1-\cos(h\,n)}{h^{1+2\,s}}\,dh\ge 2\,|n|^{2\,s}\,\int_{0}^{\pi} \frac{1-\cos\tau}{\tau^{2\,s}}\,\frac{d\tau}{\tau}.
\end{split}
\]	
Thus, from \eqref{seminorma con fourier} we get in particular
\[
\int_{0}^{2\,\pi}\int_{0}^{2\,\pi}\frac{|w(\theta+h)-w(\theta)|^2}{|h|_{\mathbb{S}^1}^{1+2\,s}}\,d\theta\,dh\ge 
	8\,\pi\,\left(\int_{0}^{\pi} \frac{1-\cos\tau}{\tau^{2\,s}}\,\frac{d\tau}{\tau}\right)\,\sum\limits_{n\in\mathbb{Z}}|n|^{2\,s}\,|\widehat{w}(n)|^2.
\]	
This finally proves \eqref{disug seminorma fourier}, with constant
\[
C_{1,s}=8\,\pi\,\int_{0}^{\pi} \frac{1-\cos\tau}{\tau^{2\,s}}\,\frac{d\tau}{\tau}.
\]
\noindent
{\bf Proof of} \eqref{disug norma l2 fourier}: from Plancherel's identity, we know that
	\begin{equation}\label{numero}
	\frac{1}{2\pi}\,\int_{0}^{2\,\pi}|w(\theta)|^2\,d\theta=\sum_{n\in \mathbb{Z}}|\widehat{w}(n)|^2.
	\end{equation}
By using the Fourier expansion for $w$ and the assumption $w(0)=w(2\,\pi)=0$, we can infer that
	\[
	0=w(0)=\sum\limits_{n\in\mathbb{Z}}\widehat{w}(n).
	\]
This in turn implies that
\[
	|\widehat{w}(0)|=\left|\sum\limits_{n\in\mathbb{Z}\setminus\{0\}}\widehat{w}(n)\right|\le \sum_{n\in\mathbb{Z}\setminus\{0\}} |\widehat{w}(n)|,
\]	and so we can obtain
	\begin{equation}\label{app1eq2}
	\begin{split}
	\sum_{n\in\mathbb{Z}}|\widehat{w}(n)|^2&=\sum_{n\in\mathbb{Z}\setminus\{0\}}|\widehat{w}(n)|^2+|\widehat{w}(0)|^2
	\le \sum_{n\in\mathbb{Z}\setminus\{0\}}|\widehat{w}(n)|^2+\left(\sum_{n\in\mathbb{Z}\setminus\{0\}}|\widehat{w}(n)|\right)^2.
	\end{split}
	\end{equation}
We now estimate the last term in \eqref{app1eq2} by using H\"older's inequality
	\[
	\sum\limits_{n\in\mathbb{Z}}|a_n|\,|b_n|\le\left(\sum_{n\in\mathbb{Z}}|a_n|^2\right)^\frac{1}{2}\,\left(\sum_{n\in\mathbb{Z}}|b_n|^2\right)^\frac{1}{2},
	\]
	with the choices $|a_n|=1/|n|^{s}$ and $|b_n|=|\widehat{w}(n)|\,|n|^{s}$.
This yields
\[
\begin{split}
\sum_{n\in\mathbb{Z}}|\widehat{w}(n)|^2&\le \sum_{n\in\mathbb{Z}\setminus\{0\}}|\widehat{w}(n)|^2+\left(\sum\limits_{n\in\mathbb{Z}\setminus\{0\}}\frac{1}{|n|^{2\,s}}\right)\,\left(\sum\limits_{n\in\mathbb{Z}\setminus\{0\}}|n|^{2\,s}\,|\widehat{w}(n)|^2\right)\\
&\le C\,\left(\sum\limits_{n\in\mathbb{Z}\setminus\{0\}}|n|^{2s}\,|\widehat{w}(n)|^2\right),
\end{split}
\]	
where we set
\[
C=1+2\,\sum_{n=1}^\infty \frac{1}{n^{2\,s}}.
\]
Observe that this is a finite quantity, thanks to the crucial assumption $s>1/2$. By using this estimate in \eqref{numero}, we then obtain the claimed inequality \eqref{disug norma l2 fourier}, with constant
\[
C_{2,s}=2\,\pi\,\left(1+2\,\sum_{n=1}^\infty \frac{1}{n^{2\,s}}\right).
\]
\vskip.2cm\noindent 
{\bf Asymptotic behaviour of the constant.}	 As we said, from the above discussion we get the inequality \eqref{1dpoin}, with $
\mu_s=C_{1,s}/C_{2,s}$.
It is easily seen that 
\[
\lim_{s\to \left(\frac{1}{2}\right)^+} C_{1,s}=8\,\pi\,\int_{0}^{\pi} \frac{1-\cos\tau}{\tau}\,\frac{d\tau}{\tau}<+\infty,
\]
while 
\[
\lim_{s\to \left(\frac{1}{2}\right)^+} (2\,s-1)\,C_{2,s}=2\,\pi\,
\lim_{s\to \left(\frac{1}{2}\right)^+}(2\,s-1)\,\left(1+2\,\sum_{n=1}^\infty \frac{1}{n^{2\,s}}\right)=4\,\pi,
\]
by using the fact that the Riemann zeta function has a simple pole with residue $1$ at $z=1$ (see \cite[Section 13.2.6]{Kr}). This proves that $\mu_s$ has the claimed asymptotic behaviour, as $s$ goes to $1/2$.
\par
As for the behaviour at $s\sim 1$, we observe that 
\[
\lim_{s\to 1^-} C_{2,s}=2\,\pi\,\left(1+2\,\sum_{n=1}^\infty \frac{1}{n^{2}}\right)=2\,\pi\,\left(1+\frac{\pi^2}{3}\right),
\] 
while
\[
\begin{split}
\lim_{s\to 1^-} (1-s)\,C_{1,s}&=8\,\pi\,\lim_{s\to 1^-}(1-s)\,\int_0^\pi\frac{1-\cos\tau}{\tau^{2\,s}}\,\frac{d\tau}{\tau}\\
&=4\,\pi\,\lim_{s\to 1^-}(1-s)\,\int_0^\pi\tau^{2-2\,s}\,\frac{d\tau}{\tau}\\
&-4\,\pi\,\lim_{s\to 1^-}(1-s)\,\int_0^\pi\frac{\int_0^\ell (\tau-\ell)^2\,\sin\ell\,d\ell}{\tau^{2\,s}}\,\frac{d\tau}{\tau}=2\,\pi,
\end{split}
\]
where we used the third order Taylor expansion
\[
f(\tau)=f(0)+f'(0)\,\tau+\frac{1}{2}\,f''(0)\,\tau^2+\frac{1}{2}\,\int_0^\tau f'''(\ell)\,(\tau-\ell)^2\,d\ell,
\]
for the cosine function. This eventually leads to the conclusion of the proof.
	\end{proof}

\end{document}